\documentclass{amsart}

\usepackage{amssymb}
\usepackage{amsthm}
\usepackage{amsmath}
\usepackage{mathtools}

\usepackage[usenames]{color}

\usepackage{enumitem}

\usepackage{hyperref}
\hypersetup{
    colorlinks=true,
    linkcolor=blue,
    filecolor=magenta,      
    urlcolor=cyan,
    citecolor=red
}

\theoremstyle{plain}
\newtheorem{proposition}{Proposition}[section]
\newtheorem{theorem}[proposition]{Theorem}
\newtheorem{lemma}[proposition]{Lemma}
\newtheorem{corollary}[proposition]{Corollary}
\theoremstyle{definition}

\theoremstyle{remark}
\newtheorem{remark}[proposition]{Remark}

\DeclareMathOperator{\supp}{supp}

\DeclareMathOperator{\SL}{\mathsf{SL}}

\DeclareMathOperator{\SO}{\mathsf{SO}}

\DeclareMathOperator{\End}{End}

\DeclareMathOperator{\id}{id}

\DeclareMathOperator{\dist}{d}

\DeclareMathOperator{\Fc}{\mathcal{F}}

\DeclareMathOperator{\Oc}{\mathcal{O}}

\DeclareMathOperator{\Uc}{\mathcal{U}}

\DeclareMathOperator{\Nb}{\mathbb{N}}

\DeclareMathOperator{\Rb}{\mathbb{R}}

\DeclareMathOperator{\Gsf}{\mathsf{G}}
\DeclareMathOperator{\Psf}{\mathsf{P}}

\DeclareMathOperator{\mfa}{\mathfrak{a}}

\newcommand{\abs}[1]{\left|#1\right|}

\newcommand{\norm}[1]{\left\|#1\right\|}

\newcommand{\mc}{\mathcal}

\newcommand{\ms}{\mathsf}

\DeclareMathOperator{\lox}{lox}

\begin{document}

\title{Orbital counting for relatively Anosov groups}

\author[Canary]{Richard Canary}
\address{University of Michigan}
\author[Zhang]{Tengren Zhang}
\address{National University of Singapore}
\author[Zhu]{Feng Zhu}
\address{University of Wisconsin-Madison}
\author[Zimmer]{Andrew Zimmer}
\address{University of Wisconsin-Madison}
\thanks{Canary was partially supported by grant DMS-2304636 from the National Science Foundation and a Fellowship from the Simons Foundation.
Zhang was partially supported by NUS-MOE grant A-8004148-00-00. 
Zimmer was partially supported by a Sloan Research Fellowship and grants DMS-2105580 and
DMS-2452068 from the National Science Foundation.}

\begin{abstract}
We obtain orbital counting results for relatively Anosov groups with respect to linear functionals with finite critical exponent.
Our counting results follow from an equidistribution result and rely crucially on previous equidistribution results obtained in our proof of counting
results for periods. Our results generalize earlier work of Sambarino in the setting of Anosov groups.
\end{abstract}

\date{\today}
\keywords{}
\subjclass[2010]{}

\maketitle

\section{Introduction}

Anosov subgroups of semisimple Lie groups were introduced by Labourie \cite{labourie-invent} and are now widely recognized as the natural higher rank analogue of convex cocompact subgroups of rank one Lie groups. Relatively Anosov groups were introduced by Kapovich and Leeb \cite{KL} (see also Zhu \cite{zhu-thesis}) and are one natural higher rank analogue of geometrically finite subgroups of rank one Lie groups (another natural generalization are the extended geometrically finite groups introduced by Weisman \cite{weisman-egf}).

For any non-empty subset $\theta$ of the set of (restricted) simple roots of a semi-simple Lie group $\mathsf G$, the $\theta$-Jordan projection $\lambda_\theta$ of an element in $\mathsf G$ has image in the $\theta$-Cartan subspace $\mathfrak a_\theta$ of $\mathsf G$, and may be thought of as a vector-valued translation length function (see Section \ref{Jordan projection} for more details). If one chooses a linear functional $\phi$ on $\mathfrak a_\theta$, one may post-compose it with $\lambda_\theta$ to get a real-valued length function which we call the $\phi$-length. 
The $\theta$-Cartan projection $\kappa_\theta$ of an element in $\mathsf G$ also has image in $\mathfrak a_\theta$, and may be thought of as a vector-valued distance between a fixed basepoint and its image (see Section \ref{Cartan projection} for more details). If we choose a linear functional $\phi$ on $\mathfrak a_\theta$, we get a real valued function which we think of as the $\phi$-distance between the  basepoint and its image.  

We would naturally like  to choose $\phi$ so that the $\phi$-distance is a proper function on the group.
If $\Gamma$ is relatively $\Psf_\theta$-Anosov, this is equivalent to choosing a linear functional $\phi \in \mfa_\theta^*$ whose associated Poincar\'e series has finite critical exponent $\delta: = \delta_\Gamma^\phi$ (see \cite[Thm.\ 10.1]{CZZ4}). 
Previous work \cite[Cor.\ 10.7]{BCZZ2}, established  that the number of conjugacy classes of loxodromic elements of $\Gamma$ of $\phi$-length at most $T$ grows like $\frac{e^{\delta T}}{\delta T}$.
In this paper we determine the asymptotic behavior of the number of orbit points with $\phi$-distance at most $T$ to the basepoint.
Sambarino \cite[Thm.\ A]{sambarino-quantitative} previously established this result for $\Psf_\theta$-Anosov groups.

\begin{theorem}\label{thm:functional counting rel Anosov}
If $\Gamma\subset\mathsf G$ is a relatively $\Psf_\theta$-Anosov subgroup and $\phi \in \mfa_\theta^*$ has finite critical exponent $\delta:= \delta_\Gamma^\phi$, then there exists $C > 0$ such that 
$$
\#\{ \gamma \in \Gamma : \phi(\kappa_\theta(\gamma)) \leq R\} \sim Ce^{\delta R},
$$
i.e.\ the ratio of the two sides goes to 1 as $R \to +\infty$.
\end{theorem}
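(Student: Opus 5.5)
The plan is to follow Roblin's and Sambarino's route: first prove an equidistribution statement for orbit points, then integrate it. For a relatively $\Psf_\theta$-Anosov group and $\phi$ with finite critical exponent $\delta$, our earlier work supplies the ingredients:

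\begin{itemize}
\item a $\phi$-Patterson--Sullivan measure $\mu$ on the limit set $\Lambda_\theta(\Gamma)\subset \Fc_\theta$;
\item a dual measure $\bar\mu$ for the "opposite" functional $\phi\circ \iota^*$ on $\Lambda_{\iota(\theta)}(\Gamma)$;
\item the associated Bowen--Margulis--Sullivan measure $m$ on the flow space. This space is a quotient of $\Lambda^{(2)}\times\Rb$, built with the $\phi$-Gromov product. The earlier paper shows that $m$ is finite and that the translation flow is mixing. This was the key input for the counting of periods in \cite{BCZZ2}.
\end{itemize}

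The target is the Roblin-type statement
$$
\delta\, \|m\|\, e^{-\delta R}\sum_{\phi(\kappa_\theta(\gamma))\le R} \mathcal D_{\gamma^{-1}x_0}\otimes \mathcal D_{\gamma x_0} \longrightarrow \mu\otimes\bar\mu
$$
weak-$*$ on a suitable compactification $\Gamma\cup\Lambda_{\iota(\theta)}\times \Gamma\cup\Lambda_\theta$. Evaluating it on the constant function $1$ gives $\#\{\gamma:\phi(\kappa_\theta(\gamma))\le R\}\sim \frac{\|\mu\|\|\bar\mu\|}{\delta\|m\|}e^{\delta R}$, which is the statement with $C=\|\mu\|\|\bar\mu\|/(\delta\|m\|)$.

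\textbf{Step 1 (local counting).} Fix small open sets $A\subset\Fc_{\iota(\theta)}$ and $B\subset\Fc_\theta$ with $\mu,\bar\mu$-null boundaries, and transverse closures. Consider the set of $\gamma$ with $\gamma x_0$ in a cone towards $B$, $\gamma^{-1}x_0$ in a cone towards $A$, and $\phi(\kappa_\theta(\gamma))\le R$. I would estimate its size by the standard thickening argument: the quantity $\int_0^R e^{\delta t}\, m(\mathcal V_{A,\epsilon}\cap \varphi_{-t}\mathcal V_{B,\epsilon})\,dt$ unfolds into a sum over $\gamma$. Here $\mathcal V$ are flow boxes in the flow space, and we use that $\phi(\kappa_\theta(\gamma))$ agrees with the $\phi$-Busemann cocycle difference up to $O(\epsilon)$ when $\gamma x_0$ lies deep in the cone. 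This comparison between Cartan projection and Busemann cocycle uses the uniform regularity / divergence estimates for relatively Anosov groups. Mixing then gives $m(\mathcal V_A\cap\varphi_{-t}\mathcal V_B)\to m(\mathcal V_A)m(\mathcal V_B)/\|m\|$. Letting $\epsilon\to0$ yields the limit on product boxes.

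\textbf{Step 2 (the main obstacle): globalising across cusps.} The main obstacle is passing from boxes to the total count. In the convex cocompact case this is a compactness argument. Here orbit points escaping into parabolic cusps can be non-uniformly regular along the cusp. Points near a parabolic fixed point $p$ are also not covered by transverse boxes away from $p$.

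I would handle this by a uniform tail estimate. For each of the finitely many conjugacy classes of peripheral subgroups $P$, show that the number of $\gamma$ with $\phi(\kappa_\theta(\gamma))\le R$ whose orbit point lies in a small horoball neighborhood of the $P$-cusp is at most $\eta(\epsilon)e^{\delta R}$, with $\eta(\epsilon)\to0$. The proof would write $\gamma=\gamma' p$ with $p\in P$, and use the additivity estimate $\phi(\kappa_\theta(\gamma' p))\ge \phi(\kappa_\theta(\gamma'))+\phi(\kappa_\theta(p))-c$, valid for such decompositions. Then the tail reduces to $\sum_{p\in P,\ |p|\text{ large}}e^{-\delta\phi(\kappa_\theta(p))}$, which is small because $\delta_P^\phi<\delta$. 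That strict gap was established in \cite{CZZ4} / \cite{BCZZ2} as part of showing finiteness of $m$.

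\textbf{Step 3 (conclusion).} The same decomposition trick handles pairs of boxes whose closures are not transverse: there, mixing plus the $\mu$-nullity of atoms (parabolic points carry no mass) shows the contribution is negligible. Summing over a finite partition then gives the weak-$*$ statement, and in particular the asymptotic for the count with the constant $C$ above.
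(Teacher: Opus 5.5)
\textbf{The diagonal is not handled.} Your Step~1 only treats boxes $A\times B$ with $\overline A\times\overline B\subset\Fc_\theta^{(2)}$. In the paper this role is played by Proposition~\ref{off the diagonal}, which is deduced from equidistribution of periods (Theorem~\ref{old equidistribution}) rather than directly from mixing. Your Roblin-style derivation of it is a plausible alternative.

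The genuine gap is the non-transverse locus $D=\Fc_\theta^2\smallsetminus\Fc_\theta^{(2)}$. It meets $\Lambda_\theta(\Gamma)^2$ in the whole diagonal $\{(x,x):x\in\Lambda_\theta(\Gamma)\}$, conical points included, not only at parabolic points. Boxes with compact closure in $\Fc_\theta^{(2)}$ never cover a neighbourhood of $D$. So even for Anosov groups, which have no cusps, globalising is \emph{not} a compactness argument. Convergence against $C_c(\Fc_\theta^{(2)})$ only gives $\liminf_R\nu_R(\Fc_\theta^2)\ge 1/\norm{m}$, and a priori mass can pile up near $D$.

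Step~3 does not rule this out:
\begin{itemize}
\item The decomposition $\gamma=\gamma'p$ concerns peripheral subgroups. It says nothing about elements with $U_\theta(\gamma^{-1})\approx U_\theta(\gamma)\approx x$ for a conical point $x$.
\item Mixing concerns flow boxes in $\mathsf U_\Gamma^\phi$, whose points are \emph{transverse} pairs. As you set things up, it gives no information about such orbit points.
\item Non-atomicity of $\mu$ and $\bar\mu$ (at all points, not just parabolic ones) gives $(\bar\mu\otimes\mu)(D)=0$. It does not give what is needed, namely $\limsup_R\nu_R(\Oc)\to0$ as the neighbourhood $\Oc$ of $D$ shrinks.
\end{itemize}

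The paper proves that non-concentration estimate (Proposition~\ref{on the diagonal}) with a separate idea:
\begin{itemize}
\item The map $\gamma\mapsto h\gamma g^{-1}$ moves $(U_\theta(\gamma^{-1}),U_\theta(\gamma))$ asymptotically by $(g,h)$ and changes $\phi(\kappa_\theta(\gamma))$ by $O(1)$ (Lemma~\ref{lem:weak invariance}).
\item Fix two loxodromics $\eta_1,\eta_2\in\Gamma$ with no common fixed point. Then every small box $V\times V$ meeting the limit set is pushed to a transverse box $V\times\eta_iV$ (Lemma~\ref{lem:shifting}).
\item On the transverse box, the off-diagonal result bounds the asymptotic mass by a constant times $\bar\mu(\overline V)\,\mu(\eta_i\overline V)$.
\item Summing over a cover of bounded multiplicity with $\bar\mu(V_n)$ uniformly small gives the estimate.
\end{itemize}
A Roblin argument built from flow boxes at the basepoint, cut out by a bound on the Gromov product so that the two counted sets need not be transverse to each other, might also avoid the issue. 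Your Step~1 is not of that form.

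\textbf{Step~2 addresses a difficulty that is not there.} Because $\Gamma$ is $\Psf_\theta$-divergent, comparing $\phi(\kappa_\theta(\gamma))$ with cocycle or Gromov-product quantities is uniform outside a finite subset of $\Gamma$. This holds once $(U_\theta(\gamma^{-1}),U_\theta(\gamma))$ lies in a fixed compact subset of $\Fc_\theta^{(2)}$ (cf.\ Lemma~\ref{prop:length versus magnitude}), however slowly peripheral elements become regular. The cusp geometry enters only through finiteness of $m$ and mixing, which you already take as input. As written, your tail bound would also need an a priori estimate $\#\{\gamma:\phi(\kappa_\theta(\gamma))\le R\}=O(e^{\delta R})$ that you have not established. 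Finally, the factors in your limit are swapped: $U_\theta(\gamma)$ equidistributes to $\mu$ and $U_\theta(\gamma^{-1})$ to $\bar\mu$. This is harmless for the count itself.
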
 

Albuquerque \cite{albuquerque},  Link \cite{link-ps}, and Quint \cite{quint-ps}  developed a theory of Patterson--Sullivan measures for discrete subgroups of higher rank Lie groups.
For any linear functional $\phi$ on $\mathfrak a_\theta$, the $\phi$-Patterson--Sullivan measure for a discrete subgroup $\Gamma$ of $\mathsf G$ is a probability measure, supported on the limit set of $\Gamma$ in the $\theta$-flag variety $\Fc_\theta$ associated to $\mathsf G$, which is quasi-invariant with respect to the $\Gamma$-action, with Radon--Nikodym derivatives contolled by the composition of $\phi$ and the $\theta$-Iwasawa cocycle (which is a vector-valued analogue of the Busemann function in rank one, see Section \ref{Iwasawa}). Given a linear functional $\phi \in \mfa_\theta^*$, define $\bar\phi$ to be the linear functional such that $\bar\phi(\lambda_\theta(\gamma))=\phi(\lambda_\theta(\gamma^{-1}))$ for any element $\gamma\in\mathsf G$. By definition, $\phi$ and $\bar\phi$ have the same critical exponent. If $\Gamma$ is a $\Psf_\theta$-transverse subgroup, then given a $\phi$-Patterson--Sullivan measure $\mu$ and a $\bar\phi$-Patterson--Sullivan measure $\bar\mu$ (of the same dimension), one can define a BMS-measure $m=m_{\mu,\bar\mu}$ on an associated flow space $\mathsf U_\Gamma^\phi$ (see Section \ref{BMS}). 

 If $\Gamma$ is $\Psf_\theta$-relatively Anosov, then for any linear functional $\phi$ on $\mathfrak a_\theta$ with finite critical exponent $\delta^\phi_\Gamma$, there is a unique $\phi$-Patterson--Sullivan measure for $\Gamma$ of critical dimension \cite{CZZ4}, and the total mass $\norm{m}$ of $m$ is finite \cite[Thm.\ 1.1]{KO} or \cite[Thm.\ 1.4]{BCZZ2}. Our main theorem follows from the following orbital equidistribution result for well-behaved linear functionals. Below, $U_\theta(\gamma)$ is the image of the $\theta$-flag which is maximally stretched by $\gamma$ (see Section \ref{Cartan projection}) and $\mathcal{D}_x$ denotes the Dirac mass at some $x \in \Fc_\theta$

\begin{theorem} 
\label{new equidistribution}
Suppose that $\Gamma\subset\mathsf G$ is a relatively $\Psf_\theta$-Anosov subgroup and $\phi \in \mfa_\theta^*$ has finite critical exponent $\delta:= \delta_\Gamma^\phi$. If $\nu_R$ is the measure on $\Fc_\theta^2$ given by
$$
\nu_R := \delta e^{-\delta R}  \sum_{\substack{\gamma \in \Gamma \\ \phi(\kappa_\theta(\gamma)) \leq R}}\mc D_{U_\theta(\gamma^{-1})}\otimes \mc D_{U_\theta(\gamma)},$$ 
then for any continuous function on $\Fc_\theta^2$, we have
\[
 \lim_{R \to \infty} \int f\,d\nu_R= \frac{1}{\norm{m}}\int f	 \,d(\bar\mu \otimes \mu)
 \]
 where $\bar\mu$ and $\mu$ are respectively the $\bar\phi$-Patterson--Sullivan measure and $\phi$-Patterson--Sullivan measure for $\Gamma$ (of critical dimension), and $m$ is the associated BMS-measure.
\end{theorem}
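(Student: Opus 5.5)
The plan is to follow Roblin's strategy, in the form Sambarino used for Anosov groups. The dynamical input is mixing of the flow on $\mathsf U_\Gamma^\phi$ with respect to the finite BMS-measure $m$. I expect this is available from our earlier work on counting periods (the equidistribution results in \cite{BCZZ2}), and I will assume it.

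The argument proceeds in the following steps.

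\begin{enumerate}[label=(\arabic*)]
\item \emph{Reduction to product test functions.} Since $\bar\mu\otimes\mu$ is a finite measure and continuous functions on $\Fc_\theta^2$ can be uniformly approximated by finite sums of products, it suffices to prove the convergence for $f=f_1\otimes f_2$. It also suffices to treat $\limsup$ and $\liminf$ separately. So I will prove
\[
\limsup_R \int f_1\otimes f_2 \, d\nu_R \le \frac{1}{\norm{m}}\,\bar\mu(f_1)\,\mu(f_2)\,(1+\epsilon)
\]
and the analogous lower bound, for $f_i$ supported in small balls $B^-,B^+$ (up to boundary effects, which are handled by the usual approximation of indicator functions of sets whose boundaries are null for $\bar\mu$ and $\mu$). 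A preliminary observation is needed here: $\mu$ and $\bar\mu$ have no atoms. Indeed, parabolic fixed points are non-atoms because the critical exponent is finite and the Patterson--Sullivan measure is unique and conical-supported, using divergence from \cite{CZZ4}.

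\item \emph{Mixing statement.} From mixing, for small transverse boxes $\mathcal B^\pm\subset \mathsf U_\Gamma^\phi$ built over $B^\pm\times B^\mp$ with flow-thickness $\epsilon$, I get
\[
m(\mathcal B^-\cap \psi_{t}\gamma\mathcal B^+)\to m(\mathcal B^-)\,m(\mathcal B^+)/\norm{m}
\]
after summing over $\gamma\in\Gamma$, as in Roblin's proof.
\end{enumerate}

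\textbf{Main step: converting mixing to orbital counting.} The key geometric fact is a shadow-type lemma for transverse groups. Suppose $\gamma$ is such that the translate $\gamma\mathcal B^+$ meets $\psi_{-t}\mathcal B^-$. Then:
\begin{itemize}
\item $\phi(\kappa_\theta(\gamma))$ is within $O(\epsilon)$ of $t$;
\item $U_\theta(\gamma)$ is close to $B^+$;
\item $U_\theta(\gamma^{-1})$ is close to $B^-$.
\end{itemize}
Conversely, such $\gamma$ give intersections whose measure is
\[
\approx e^{-\delta t}\,\bar\mu(B^-)\,\mu(B^+)\times(\text{thickness}).
\]
This uses the conformality of $\mu$ and $\bar\mu$ via the Iwasawa cocycle, together with the comparison $\phi(B_\theta(\gamma^{-1},\xi))\approx \phi(\kappa_\theta(\gamma))$ for $\xi$ uniformly transverse to $U_\theta(\gamma^{-1})$. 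Integrating the mixing asymptotic over $t\in[0,R]$ against $e^{\delta t}$ then yields the factor $\delta^{-1}e^{\delta R}$, which produces the normalization $\delta e^{-\delta R}$.

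\textbf{Main obstacle.} The hardest part is that $\Gamma$ is only relatively Anosov. Elements $\gamma$ deep in cusps, i.e.\ in peripheral subgroups, need not have $U_\theta(\gamma)$ and $U_\theta(\gamma^{-1})$ correspond to a flow line through a compact part of $\mathsf U_\Gamma^\phi$, and the uniform multiplicative estimates can fail there. I would handle this as follows.
\begin{itemize}
\item Restrict the boxes to a compact set of the flow space.
\item Use the relative Anosov structure. Every $\gamma$ with $U_\theta(\gamma^{\pm1})$ in $B^\pm$ and $B^\pm$ away from parabolic points decomposes as $\gamma = a\gamma' b$, with $\gamma'$ passing through the thick part and $a,b$ controlled. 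Here I would exploit the fact that $\kappa_\theta$ is additive up to bounded error along such decompositions, which is \cite{CZZ4}.
\item Control the cusp contribution to the counting by a tail estimate, namely finiteness of $\norm{m}$ and convergence of the peripheral Poincaré series at $\delta$. The latter holds because $\delta_P^\phi<\delta$, a consequence of the finite critical exponent, as in \cite{BCZZ2}.
\end{itemize}
Together these show that the mass near parabolic points is $O(\epsilon)$ uniformly in $R$. Finally, letting $\epsilon\to0$ gives the theorem.
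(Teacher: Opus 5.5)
\textbf{The gap: the non-transverse locus.} The missing piece is control of $\nu_R$ near $D=\Fc_\theta^2\smallsetminus\Fc_\theta^{(2)}$. Your reduction to $f_1\otimes f_2$ with $f_i$ supported in small balls forces you to handle pairs $B^-,B^+$ that meet at a point of $\Lambda_\theta(\Gamma)$ (e.g.\ $B^-=B^+$). These cannot be discarded, and they are not a cusp phenomenon: if $\eta\in\Gamma$ is loxodromic and $h^{-1}\eta^-$ is close to $\eta^+$, then $\gamma=\eta^nh$ has $(U_\theta(\gamma^{-1}),U_\theta(\gamma))\to(h^{-1}\eta^-,\eta^+)$, which is near the diagonal at a conical point.

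Your boxes cannot reach this region. The space $\mathsf U_\Gamma^\phi$ lives only over $\Lambda_\theta(\Gamma)^{(2)}$, and the density $e^{\delta\phi(G_\theta(x,y))}$ blows up near the diagonal. So boxes ``built over $B^\pm\times B^\mp$'' are relatively compact only when $\overline{B^+}\cap\overline{B^-}\cap\Lambda_\theta(\Gamma)=\varnothing$, and as set up your mixing argument can at best give the off-diagonal statement, Proposition~\ref{off the diagonal}. Non-atomicity gives $(\bar\mu\otimes\mu)(D)=0$, but that only helps the lower bound. What is missing is an upper bound $\limsup_R\nu_R(\Oc)\le\epsilon$ for some neighbourhood $\Oc$ of $D$. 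Without it you get neither the case $f\equiv 1$ nor even $\sup_R\nu_R(\Fc_\theta^2)<\infty$, which your uniform approximation by product functions tacitly uses.

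The paper supplies this as Proposition~\ref{on the diagonal}, in three steps:
$\limsup_R\nu_R$ is coarsely invariant under $(g,h)\in\Gamma^2$ (Lemma~\ref{lem:weak invariance});
any small set $V$ meeting $\Lambda_\theta(\Gamma)$ satisfies $V\times\eta_iV\subset\Fc_\theta^{(2)}$ for one of two loxodromics $\eta_1,\eta_2$ with no common fixed point (Lemma~\ref{lem:shifting});
one then applies the off-diagonal result to $\overline V_n\times\eta_{i_n}\overline V_n$ for a bounded-multiplicity cover by sets of small $\bar\mu$-mass.
Within your framework, a Roblin-type repair would be to give one box backward endpoints in $B^-$ and the other forward endpoints in $B^+$. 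The remaining endpoints would lie in auxiliary small sets transverse to $B^-$ and $B^+$ respectively, so $B^-$ and $B^+$ need not be transverse to each other. That would have to be set up and the bookkeeping redone.

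\textbf{The cusp discussion and the shadow step.} The obstacle you single out is not the real one, and your remedy is not sound as stated. When cusps exist, there are no balls meeting $\Lambda_\theta(\Gamma)$ that stay ``away from parabolic points'': the parabolic points form a $\Gamma$-invariant set, so they are dense by minimality (Proposition~\ref{prop: convergence group}). Nor is a cusp analysis needed. With fixed relatively compact boxes, the per-element estimates only require $\gamma$ to lie outside a finite set and transversality to be uniform inside the boxes, so peripheral elements need no separate treatment; Roblin's orbital counting likewise uses only finite BMS mass and mixing. In the paper, relative Anosovness enters only through divergence type, finiteness of $\norm{m}$, and mixing. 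Your decomposition $\gamma=a\gamma'b$ with additive $\kappa_\theta$, and the peripheral tail estimate, are asserted rather than derived. In any case, controlling mass near parabolic points would not control $\nu_R$ near $(x,x)$ for conical $x$.

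Separately, an $O(1)$ comparison in the shadow step does not give the sharp constant $1/\norm{m}$. The correct statement is that $\omega_\alpha\big(B_\theta(\gamma,\xi)-\kappa_\theta(\gamma)+G_\theta(U_\theta(\gamma^{-1}),\xi)\big)\to0$, uniformly for $(U_\theta(\gamma^{-1}),\xi)$ in compact subsets of $\Fc_\theta^{(2)}$. Note that $\gamma$, not $\gamma^{-1}$, appears in the cocycle; compare Lemma~\ref{prop:length versus magnitude}. So $\phi(\kappa_\theta(\gamma))$ is within $O(\epsilon)$ of $t$ plus a Gromov-product shift, and that shift must cancel against the densities $e^{\delta\phi(G_\theta)}$ in $m(\mathcal B^\pm)$. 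The paper avoids all this box bookkeeping by passing mixing through the period equidistribution, Theorem~\ref{old equidistribution}, and converting Jordan data to Cartan data via Lemmas~\ref{prop:length versus magnitude} and~\ref{lem:length versus magnitude 2}.
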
 

Notice that using $f\equiv 1$ in Theorem \ref{new equidistribution} gives Theorem \ref{thm:functional counting rel Anosov} with constant $C=\frac{1}{\delta \norm{m}}$. 

\medskip

The crucial tool in the proof of Theorem~\ref{new equidistribution} is an earlier equidistribution result which was developed for counting $\phi$-lengths in~\cite{BCZZ2}.
Its use follows the same outline as Sambarino's proof in the Anosov case \cite[Thm.\ A]{sambarino-quantitative}. This approach was suggested to Sambarino by Roblin.

Both of our theorems hold for the more general class of transverse subgroups and linear functionals with finite critical exponent whose associated BMS measures
has finite mass, see Section~\ref{sec:proof of the main theorem}.

\medskip\noindent
{\bf Historical remarks:}
These results and methods fit into a long history of using dynamical methods to obtain asymptotic counting results, starting with Margulis \cite{margulis} 
who established  counting, mixing  and equidistribution results for negatively curved manifolds, or more generally for Anosov flows on closed manifolds. 
Parry and Pollicott \cite{parry-pollicott} later showed how to use the symbolic dynamics given by Bowen \cite{bowen-symbolic} and the thermodynamic formalism developed by Ruelle \cite{ruelle-book} to obtain generalizations of these results.
Patterson \cite{patterson} and Sullivan \cite{sullivan-density,sullivan-hd} developed the theory of Patterson--Sullivan measures, used them to relate the critical exponent of geometrically finite hyperbolic manifolds to the Hausdorff dimension of their limit sets, and established counting results.

Roblin \cite{roblin} combined Margulis' approach with the theory of Patterson--Sullivan measures to obtain mixing, counting and equidistribution results in the CAT$(-1)$ setting. In particular, given a CAT$(-1)$ space $X$ and a discrete group $\Gamma$ of isometries of $X$ such that the associated BMS-measure is finite, we get asymptotic counting results for $\Gamma$-orbits in $X$ \cite[\S4]{roblin}. If in addition $\Gamma$ is geometrically finite, then we also get asymptotic counting for closed geodesics on $\Gamma \backslash X$ (i.e.\ closed orbits or ``periods'' of the geodesic flow on $\Gamma \backslash T^1X$) \cite[Thm.\ 5.2]{roblin}.
Roblin \cite{roblin} first obtained orbital equidistribution and used this to prove equidistribution of periods. In contrast, Sambarino \cite[Thm. A]{sambarino-quantitative} first obtained equidistribution of periods and then used this to obtain orbital equidistribution.

One might also like to have orbital counting results for the action of a discrete subgroup $\Gamma$  of $\mathsf G$ on the symmetric space $X$ associated to $\mathsf G$. Even in the Anosov case, this is only known in restricted settings. Quint \cite{quint-counting} proved orbital counting results in the symmetric space for Schottky groups in the sense of 
Benoist \cite{benoist}. Sambarino \cite{sambarino-orbital,sambarino-dichotomy}  and Edwards--Lee--Oh \cite{ELO} established symmetric space orbital counting results for Zariski dense Borel Anosov groups. Kim--Oh--Pan \cite{KOP} have announced counting results for relatively Borel Anosov  representations of geometrically finite Fuchsian groups.

\medskip\noindent
{\bf Acknowledgements:}
We thank Andr\'es Sambarino for suggesting that the technique of proof of \cite[Thm.\ A]{sambarino-quantitative} could be adapted to our setting. We also thank Dongryul Kim and Hee Oh for some helpful comments. 

This material is based upon work supported by the National Science Foundation under Grant No. DMS-2424139, while the authors were in residence at the Simons Laufer Mathematical Sciences Institute in Berkeley, California, during the Spring 2026 semester.

\section{Background}

Let $\mathsf G$ be a connected, semisimple Lie group with no compact factors and finite center, and let $\mathsf K\subset\mathsf G$ be a maximal compact Lie subgroup. Let $\mathfrak g$ denote the Lie algebra of $\mathsf G$, let $\mathfrak k\subset\mathfrak g$ be the Lie algebra of $\mathsf K$, and let $\mathfrak p\subset\mathfrak g$ be the orthogonal to $\mathfrak k$ with respect to the Killing form on $\mathfrak g$. Fix a maximal abelian subspace $\mfa \subset \mathfrak p$, and a (closed) positive Weyl chamber $\mathfrak a^+\subset\mathfrak a$. Let $\Sigma$ be the set of restricted roots on $\mfa$, and let $\Delta\subset\Sigma$ be the system of simple restricted roots associated to our choice of $\mathfrak a^+$. For each $\alpha\in\Delta$, let $\omega_\alpha$ denote the associated restricted fundamental weight.

Let $W$ be the Weyl group of $\mathfrak a$ and let $w_0\in W$ be the longest element. Then let $\iota \colon \mfa \to \mfa$ be the involution defined by $\iota(H) = -w_0 \cdot H$, and let $\iota^* \colon \mfa^*\to\mfa^*$ denote the dual of $\iota$, i.e.\ $\iota^*(\alpha):=\alpha\circ\iota$ for all $\alpha\in\mathfrak a^*$. We say that a non-empty subset $\theta\subset\Delta$ is \emph{symmetric} if $\iota^*(\theta)=\theta$. 

\subsection{Parabolic subgroups and flag manifolds} 
Let $$
\mathfrak g = \mathfrak g_0 \oplus \bigoplus_{\alpha \in \Sigma} \mathfrak g_\alpha
$$
be the restricted root space decomposition of $\mathfrak g$ associated to $\mfa$. Given a non-empty symmetric subset $\theta \subset \Delta$, let 
 $$
\mathfrak u_\theta=\mathfrak u_\theta^+:=  \bigoplus_{\alpha \in \Sigma^+_\theta} \mathfrak g_\alpha\quad\text{and}\quad\mathfrak u_\theta^- :=  \bigoplus_{\alpha \in \Sigma^+_\theta} \mathfrak g_{-\iota^*(\alpha)},
$$
where $\Sigma^+_\theta := \Sigma^+ \smallsetminus {\rm Span}( \Delta \smallsetminus \theta)$. The \emph{standard parabolic subgroup associated to $\theta$} and the \emph{standard opposite parabolic subgroup associated to $\theta$}, denoted $\Psf_\theta=\Psf_\theta^+$ and $\Psf_\theta^-$, are the normalizers in $\mathsf G$ of $\mathfrak u_\theta^+$ and $\mathfrak u_\theta^-$ respectively. 

The \emph{flag manifold associated to $\theta$}, denoted $\Fc_\theta$, is the set of conjugacy classes in $\mathsf G$ of $\Psf_\theta$. Since $\Psf_\theta$ is self-normalizing in $\mathsf G$, we have the identification
\[\Fc_\theta\cong\mathsf{G}/\mathsf{P}_\theta.\]
as $\mathsf G$-spaces.  Notice  that $\Psf_\theta^-=w_0\Psf_{\iota^*(\theta)}^+w_0^{-1}\in\Fc_{\iota^*(\theta)}$.    The set $\Fc_\theta$ is naturally a compact, smooth manifold, so we may pick a Riemannian metric on $\Fc_\theta$, and denote its associated distance function by $\dist_{\Fc_\theta}$. Since any two such Riemannian metrics on $\Fc_\theta$ are biLipschitz, for our purposes the choice of this metric is irrelevant. 

We say that a pair of flags $(x,y)\in\Fc_\theta\times\Fc_{\iota^*(\theta)}$ are \emph{transverse} if $(x, y)$ is contained in the $\mathsf G$-orbit of $(\Psf_\theta^+, \Psf_\theta^-)$. For any flag $x\in\Fc_\theta$, let $\mathcal Z_x \subset \Fc_{\iota^*(\theta)}$ denote the set of flags that are not transverse to $x$, and note that $\Fc_{\iota^*(\theta)}\smallsetminus \mathcal Z_x\subset\Fc_{\iota^*(\theta)}$ is a dense open subset. 
We also denote
\[\Fc_\theta^{(2)}:=\{(x,y)\in\Fc_\theta\times\Fc_{\iota^*(\theta)}:x\text{ and }y\text{ are transverse}\}.\]

For any non-empty subset $\theta'\subset\theta$, observe that $\Psf_\theta\subset\Psf_{\theta'}$. Thus, the \emph{forgetful map} $\pi_{\theta,\theta'}\colon \Fc_\theta\to\Fc_{\theta'}$ that sends $g\Psf_\theta\mapsto g\Psf_{\theta'}$ is well-defined. Notice that if $(x,y)\in\Fc_\theta^{(2)}$, then $(\pi_{\theta,\theta'}(x),\pi_{\iota^*(\theta),\iota^*(\theta')}(y))\in\Fc_{\theta'}^{(2)}$.

\subsection{Lie group decompositions}
We will now recall the $\mathsf{KAK}$-decomposition, the Jordan decomposition, and the Iwasawa decomposition of $\mathsf G$. These will allow us to define the Cartan projection, the Jordan projection, and the Iwasawa cocycle.

\subsubsection{Cartan projection} \label{Cartan projection}
By the $\mathsf{KAK}$-decomposition theorem, every $g\in\mathsf G$ can be written as 
$$
g = m e^{\kappa(g)} \ell
$$
for some $m, \ell \in \mathsf K$ and a unique $\kappa(g)\in\mfa^+$. Using this, one can define a \emph{Cartan projection}
\[\kappa \colon \mathsf G \rightarrow \mfa^+.\] 
Note that $\iota( \kappa(g)) = \kappa(g^{-1})$ for all $g \in \mathsf G$. 

Given a non-empty subset $\theta \subset \Delta$, the \emph{$\theta$-Cartan subspace} is 
$$
\mfa_\theta := \{ X \in \mfa : \alpha(X) = 0 \text{ for all } \alpha \in \Delta \smallsetminus \theta\}
$$
and the \emph{$\theta$-positive Weyl chamber} is 
\[\mfa_\theta^+:=\mfa_\theta\cap\mfa^+.\]
One can verify that $\{ \omega_\alpha|_{\mfa_\theta} : \alpha \in \theta\}$ is a basis of $\mfa_\theta^*$. Hence, we may identify 
\begin{align}\label{usual identification}
\mfa_\theta^*={\rm Span}\{ \omega_\alpha : \alpha \in \theta\}\subset\mfa^*.
\end{align}
Let $p_\theta \colon \mfa \rightarrow \mfa_\theta$ be the projection with the defining property that $\omega_\alpha(X)=\omega_\alpha(p_\theta(X))$ for all $\alpha\in\theta$ and $X\in\mfa$. The \emph{$\theta$-Cartan projection} is
\[\kappa_\theta:=p_\theta\circ\kappa \colon \mathsf G\to\mfa_\theta^+.\]
Note that $\phi(\kappa_\theta(g))=\phi(\kappa(g))$ for all $\phi \in \mfa_\theta^*$ and $g\in\mathsf G$, where $\phi$ on the right uses the identification \eqref{usual identification}.

For each $g\in\mathsf G$, choose a $\mathsf{KAK}$-decomposition $g=me^{\kappa(g)}\ell$, and define $U_\theta(g):=m\mathsf P_\theta\in\Fc_\theta$. In general, $U_\theta(g)$ depends on the choice of the $\mathsf{KAK}$-decomposition of $g$. However, if $\alpha(\kappa(g))>0$ (equivalently, $\alpha(\kappa_\theta(g))>0$) for all $\alpha\in\theta$, then $U_\theta(g)$ does not depend on this choice.

The following proposition characterizes when a sequence $(g_n)$ in $\mathsf G$ acts on $\Fc_\theta$ and $\Fc_{\iota^*(\theta)}$ with some north-south dynamics, see \cite[Prop.\ A.1]{CZZ3} for a proof.

\begin{proposition}\label{prop:characterizing convergence in general} Suppose $(x^+,x^-)\in \Fc_\theta\times\Fc_{\iota^*(\theta)}$ and $(g_n)$ is a sequence in $\mathsf G$. The following are equivalent:  
\begin{enumerate}
\item $U_\theta(g_n) \rightarrow x^+$, $U_{\iota^*(\theta)}(g_n^{-1}) \rightarrow x^-$ and $\lim_{n \rightarrow \infty} \alpha(\kappa(g_n)) = \infty$ for every $\alpha \in \theta$,
\item $g_n|_{\Fc_\theta\smallsetminus \mathcal Z_{x^-}} \colon \Fc_\theta\smallsetminus \mathcal Z_{x^-}\to\Fc_\theta$ converges uniformly on compact sets to the constant map whose image is $x^+$,
\item $g_n^{-1}|_{\Fc_{\iota^*(\theta)}\smallsetminus \mathcal Z_{x^+}} \colon \Fc_{\iota^*(\theta)}\smallsetminus \mathcal Z_{x^+}\to\Fc_{\iota^*(\theta)}$ converges uniformly on compact sets to the constant map whose image is $x^-$,
\end{enumerate}
\end{proposition}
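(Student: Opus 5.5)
The plan is to reduce everything to explicit linear algebra via the $\mathsf{KAK}$-decomposition. Write $g_n = m_n e^{\kappa(g_n)}\ell_n$ with $m_n,\ell_n\in\mathsf K$. Then $U_\theta(g_n)=m_n\Psf_\theta$, and since $g_n^{-1}=\ell_n^{-1}e^{-\kappa(g_n)}m_n^{-1}=\ell_n^{-1}w_0^{-1}e^{\iota(\kappa(g_n))}w_0 m_n^{-1}$ (choosing a representative of $w_0$ in $\mathsf K$), we get $U_{\iota^*(\theta)}(g_n^{-1})=\ell_n^{-1}w_0^{-1}\Psf_{\iota^*(\theta)}$. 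This transverse-pair language lets us identify $\mathcal Z_{x^-}$ with the complement of an open Bruhat cell: for $x^-=\ell^{-1}w_0^{-1}\Psf_{\iota^*(\theta)}$, a flag $y\in\Fc_\theta$ is transverse to $x^-$ iff $\ell y$ lies in the big cell $\mathsf N^-_\theta\Psf_\theta$, where $\mathsf N^-_\theta=\exp(\mathfrak u_\theta^-)$ suitably conjugated (opposite unipotent radical to $\Psf_\theta$).

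For (1)$\Rightarrow$(2): by compactness of $\mathsf K$ pass to subsequences with $m_n\to m$, $\ell_n\to\ell$; then $x^+=m\Psf_\theta$ and $x^-=\ell^{-1}w_0^{-1}\Psf_{\iota^*(\theta)}$. A compact set $C\subset\Fc_\theta\smallsetminus\mathcal Z_{x^-}$ satisfies $\ell C$ is a compact subset of the big cell, and since $\ell_n\to\ell$, for large $n$ the sets $\ell_n C$ lie in a fixed compact subset $\exp(B)\Psf_\theta$ with $B\subset\mathfrak u^-_\theta$ compact. Now $e^{\kappa(g_n)}\exp(Y)\Psf_\theta=\exp(\mathrm{Ad}(e^{\kappa(g_n)})Y)\Psf_\theta$, and $\mathrm{Ad}(e^{H})$ acts on each root space $\mathfrak g_{-\beta}\subset\mathfrak u^-_\theta$ by $e^{-\beta(H)}$ where $\beta\in\Sigma^+_\theta$. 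Every such $\beta$ has a positive coefficient on some $\alpha\in\theta$, so $\beta(\kappa(g_n))\geq\alpha(\kappa(g_n))\to\infty$ (all coefficients nonnegative, $\kappa\in\mfa^+$). Hence the action contracts $B$ uniformly to $0$, so $e^{\kappa(g_n)}\ell_n C\to\Psf_\theta$ uniformly, and applying $m_n\to m$ gives uniform convergence to $x^+$. The subsequence argument upgrades to the full sequence since every subsequence has such a sub-subsequence with the same limits. Then (1)$\Rightarrow$(3) is the same argument applied to $g_n^{-1}$, using $\kappa(g_n^{-1})=\iota\kappa(g_n)$, so $\alpha(\kappa(g_n^{-1}))\to\infty$ for $\alpha\in\iota^*(\theta)$, with the roles of $x^\pm$ swapped by the formulas above.

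For (2)$\Rightarrow$(1) (and symmetrically (3)$\Rightarrow$(1)), argue by subsequences again: pass to $m_n\to m$, $\ell_n\to\ell$, and $\kappa(g_n)$ with each $\alpha(\kappa(g_n))$, $\alpha\in\theta$, converging in $[0,\infty]$. If some $\alpha\in\theta$ stayed bounded, then $e^{\kappa(g_n)}$ restricted to the relevant root directions would not collapse an open set of the big cell to a point: the root space $\mathfrak g_{-\alpha}$ is scaled by a bounded factor, so a small open set $\ell_n^{-1}\exp(B)\Psf_\theta$ (which lies in a fixed compact subset of $\Fc_\theta\smallsetminus\mathcal Z_{x^-}$ by openness) has image of diameter bounded below, contradicting uniform convergence to a point. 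So all $\alpha(\kappa(g_n))\to\infty$, and then the forward direction shows $g_n$ converges on the complement of $\mathcal Z_{x'^-}$ to $m\Psf_\theta$, where $x'^-$ is the limit of $U_{\iota^*(\theta)}(g_n^{-1})$. Comparing on the dense open intersection of the two transverse sets gives $m\Psf_\theta=x^+$. To get $x'^-=x^-$, I would note that if $x'^-\neq x^-$ there is a point $y\in\mathcal Z_{x'^-}\smallsetminus\mathcal Z_{x^-}$ (Schubert varieties $\mathcal Z$ determine their defining flag), and along a small compact neighborhood of $y$ transverse to $x^-$ the points of $\mathcal Z_{x'^-}$ do not converge to $x^+$ under $g_n$ (the contraction fails in the non-big cell directions), contradicting (2).

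The main obstacle is this last identification step in (2)$\Rightarrow$(1): showing that the exceptional set is forced to be exactly $\mathcal Z_{x^-}$. I would handle it by working in coordinates on a lower Bruhat cell and exhibiting a direction along which $e^{\kappa(g_n)}$ expands, or alternatively by the cleaner trick of using that $\mathcal Z_{x}$ determines $x$ together with the fact that limits of $g_n$ on open dense sets are constant maps whose failure sets vary continuously with $\ell_n$.
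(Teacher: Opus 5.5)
The paper gives no argument for this proposition; it defers to \cite[Prop.\ A.1]{CZZ3}. So I am judging your proposal on its own. Your proofs of (1)$\Rightarrow$(2) and (1)$\Rightarrow$(3), via the $\mathsf{KAK}$-decomposition and the contraction of $\mathrm{Ad}(e^{\kappa(g_n)})$ on $\mathfrak u_\theta^-$, are correct. So are the first two steps of (2)$\Rightarrow$(1): a bounded $\alpha(\kappa(g_n))$ leaves the $\mathfrak g_{-\alpha}$-coordinates of a small ball in the big cell uncontracted, and once all roots diverge, comparing with the forward direction on the intersection of two dense open sets gives $U_\theta(g_n)\to x^+$.

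The gap is exactly the step you flag: showing that the subsequential limit $x'^-$ of $U_{\iota^*(\theta)}(g_n^{-1})$ equals $x^-$. Two things go wrong there.
\begin{itemize}
\item Your mechanism, that points of $\mathcal Z_{x'^-}$ do not converge to $x^+$, is false. In $\mathsf{SL}(2,\Rb)$ take $g_n=\mathrm{diag}(e^n,e^{-n})\,r_{\phi_n}$, with $r_\phi$ the rotation by $\phi$ and $\phi_n=e^{-n}$. Then $x^+=[e_1]$ and $\mathcal Z_{x'^-}=\{[e_2]\}$, yet $g_n[e_2]=[-e^n\sin\phi_n\, e_1+e^{-n}\cos\phi_n\, e_2]\to[e_1]$. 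Because $\ell_n\neq\ell$, a fixed point of $\mathcal Z_{x'^-}$ is pushed by $\ell_n$ slightly off the lower cells, and the contraction can still win.
\item The fact that $\mathcal Z_x$ determines $x$ only gives $\mathcal Z_{x'^-}\neq\mathcal Z_{x^-}$. Producing a point $y\in\mathcal Z_{x'^-}\smallsetminus\mathcal Z_{x^-}$ requires non-containment, which is stronger.
\end{itemize}

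Both points can be repaired, and no expansion estimate is needed.

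\emph{Moving points.} Let $\mathcal Z_0\subset\Fc_\theta$ be the closed set of flags not transverse to $\Psf_\theta^-$. It is invariant under $\Psf_\theta^-\supset\mathsf A$ and does not contain $\Psf_\theta$, so $c:=\min_{k\in\mathsf K}\dist_{\Fc_\theta}(k\mathcal Z_0,k\Psf_\theta)>0$. For $y\in\mathcal Z_{x'^-}$ (i.e.\ $\ell y\in\mathcal Z_0$), use the $n$-dependent points $z_n:=\ell_n^{-1}\ell y\to y$. Then $g_nz_n=m_ne^{\kappa(g_n)}\ell y\in m_n\mathcal Z_0$, so $g_nz_n$ stays at distance at least $c$ from $m_n\Psf_\theta\to x^+$. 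Hence uniform convergence to $x^+$ fails on every neighborhood of $y$, and (2) forces $y\in\mathcal Z_{x^-}$. This proves $\mathcal Z_{x'^-}\subseteq\mathcal Z_{x^-}$.

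\emph{Rigidity.} You still need that $\mathcal Z_{x'}\subseteq\mathcal Z_x$ implies $x'=x$. The proposition itself implies this, so it cannot be bypassed. Normalize $x=\Psf_\theta^-$, and let $W_{\Delta\smallsetminus\theta}\subset W$ be the subgroup generated by the reflections in $\Delta\smallsetminus\theta$. If $x'\neq x$, the Bruhat decomposition gives $x'=p\dot w x$ with $p\in\Psf^-_\theta$ and $w\notin W_{\Delta\smallsetminus\theta}$. Then $y:=p\Psf_\theta$ is transverse to $x=px$ but not to $x'$, because $\Psf_\theta$ is transverse to $\dot w x$ only when $w\in W_{\Delta\smallsetminus\theta}$.

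The same repair, with the roles of $x^+$ and $x^-$ exchanged, is needed to identify the limit of $U_\theta(g_n)$ in (3)$\Rightarrow$(1).
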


As a consequence, the maps $U_\theta$ have the following equivariance property. 

\begin{corollary}\label{cor:equivariance of U_theta} If $(g_n)$ is a sequence in $\Gsf$ with $\lim_{n \rightarrow \infty} \alpha(\kappa(g_n)) = \infty$ for every $\alpha \in \theta$ and $(a_n)$ and  $(b_n)$ are bounded sequences in $\Gsf$, then 
$$
\lim_{n \rightarrow \infty} \dist_{\Fc_\theta}(a_nU_\theta(g_n), U_\theta(a_ng_nb_n)) = 0.
$$
\end{corollary}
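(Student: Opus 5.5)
We argue by contradiction and reduce the statement to Proposition~\ref{prop:characterizing convergence in general}. Since $\Fc_\theta$ is compact, if the conclusion fails we can pass to a subsequence along which $\dist_{\Fc_\theta}(a_nU_\theta(g_n), U_\theta(a_ng_nb_n)) \geq \epsilon > 0$ for all $n$. Passing to further subsequences, we may assume:
\begin{itemize}
\item $a_n \to a$ and $b_n \to b$ in $\Gsf$, by boundedness;
\item $U_\theta(g_n) \to x^+$ and $U_{\iota^*(\theta)}(g_n^{-1}) \to x^-$, by compactness of the flag manifolds.
\end{itemize}
Then $a_nU_\theta(g_n) \to ax^+$. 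It therefore suffices to show that $U_\theta(a_ng_nb_n) \to ax^+$, which gives the contradiction.

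Set $h_n := a_ng_nb_n$. We first check that $\alpha(\kappa(h_n)) \to \infty$ for every $\alpha \in \theta$. This follows from the standard estimate $\norm{\kappa(agb)-\kappa(g)} \le \norm{\kappa(a)}+\norm{\kappa(b)}$, together with the boundedness of $\kappa(a_n)$ and $\kappa(b_n)$. This estimate is standard but not stated earlier in the paper. One can instead avoid it: condition (2) of Proposition~\ref{prop:characterizing convergence in general} for $h_n$ will be verified directly below, and (2) already encodes the divergence of the roots via the implication (2)$\Rightarrow$(1).

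By (1)$\Rightarrow$(2) applied to $(g_n)$, the maps $g_n$ converge uniformly on compact subsets of $\Fc_\theta\smallsetminus\mathcal Z_{x^-}$ to the constant map $x^+$. Consider now $h_n = a_n g_n b_n$ on $\Fc_\theta \smallsetminus \mathcal Z_{b^{-1}x^-}$, and note that $b^{-1}(\Fc_\theta\smallsetminus \mathcal Z_{x^-}) = \Fc_\theta\smallsetminus\mathcal Z_{b^{-1}x^-}$. Let $K$ be a compact subset of $\Fc_\theta \smallsetminus \mathcal Z_{b^{-1}x^-}$. Since $b_n \to b$ uniformly as maps, for large $n$ the sets $b_nK$ all lie in a fixed compact neighbourhood $K'$ of $bK$ inside the open set $\Fc_\theta\smallsetminus\mathcal Z_{x^-}$. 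Hence $g_nb_n \to x^+$ uniformly on $K$. Since the $a_n$ converge uniformly to $a$ and are equicontinuous, $h_n \to ax^+$ uniformly on $K$. So condition (2) holds for $(h_n)$ with the pair $(ax^+, b^{-1}x^-)$.

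Applying (2)$\Rightarrow$(1) to $(h_n)$ now gives $U_\theta(h_n)\to ax^+$, the desired contradiction. The main care points are the compact-set bookkeeping when transporting uniform convergence through $b_n$ and $a_n$, and the consistent use of the pair $(ax^+, b^{-1}x^-)$ in the proposition.
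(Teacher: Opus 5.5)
Your proof is correct and follows essentially the same route as the paper. You reduce to subsequences with $a_n\to a$, $b_n\to b$, $U_\theta(g_n)\to x^+$, $U_{\iota^*(\theta)}(g_n^{-1})\to x^-$, then use (1)$\Rightarrow$(2) of Proposition~\ref{prop:characterizing convergence in general} to get $a_ng_nb_n\to ax^+$ uniformly on compacta of $\Fc_\theta\smallsetminus\mathcal Z_{b^{-1}x^-}$, and finally (2)$\Rightarrow$(1) to conclude $U_\theta(a_ng_nb_n)\to ax^+$. Your extra bookkeeping (the compact neighbourhood $K'$, equicontinuity of the $a_n$, and noting that root divergence for $a_ng_nb_n$ comes for free from (2)$\Rightarrow$(1)) only spells out details the paper leaves implicit.
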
 

\begin{proof} It suffices to consider the case where $a_n \rightarrow a$, $b_n \rightarrow b$, $U_\theta(g_n) \rightarrow x^+$, and $U_{\iota^*(\theta)}(g_n^{-1}) \rightarrow x^-$. Then applying Proposition~\ref{prop:characterizing convergence in general}  to $(g_n)$ we have
$$
a_n g_nb_n x \rightarrow a x^+
$$
for all $x \in \Fc_\theta\smallsetminus \mathcal Z_{b^{-1}x^-}$ and the convergence is uniform on compact subsets. Hence  Proposition~\ref{prop:characterizing convergence in general} implies that $U_\theta(a_ng_nb_n) \rightarrow ax^+$. 
\end{proof}

\subsubsection{Jordan projection}\label{Jordan projection}
Let $\mathsf U:=\exp(\mathfrak u_\Delta)$ and $\mathsf A:=\exp(\mathfrak a)$.
By the Jordan decomposition theorem, each $g\in\mathsf G$ can be written uniquely as a commuting product $g=g_eg_ug_h$, 
where $g_e$ is conjugate to an element in $\mathsf K$, $g_u$ is conjugate to an element in $\mathsf U$, and $g_h$ is conjugate to an element in $\mathsf A$. 
The \emph{Jordan projection}
\[\lambda \colon \mathsf G\to\mathfrak a^+\]
is the map so that $\lambda(g)$ is the unique element of $\mathfrak a^+$ so that  $e^{\lambda(g)}$ is conjugate to $g_h$. 
Then for any non-empty subset $\theta\subset\Delta$, we may define the \emph{$\theta$-Jordan projection}
\[\lambda_\theta:=p_\theta\circ\lambda \colon \mathsf G\to\mathfrak a_\theta^+.\]

We say that $g\in\mathsf G$ is \emph{$\theta$-loxodromic} if $\alpha(\lambda(g))>0$ (equivalently, $\alpha(\lambda_\theta(g))>0$) for all $\alpha\in\theta$. Observe that if $g\in\mathsf G$ is $\theta$-loxodromic, then $g$ has an attracting fixed point $g^+\in\Fc_\theta$ and a repelling fixed point $g^-\in\Fc_{\iota^*(\theta)}$, both of which are necessarily unique.

\subsubsection{Iwasawa cocycle}\label{Iwasawa} By the Iwasawa decomposition theorem, every $g\in\mathsf G$ can be written uniquely as $g=kau$, where $k\in\mathsf K$, $a\in\mathsf A$, and $u\in\mathsf U$. One can then define the \emph{Iwasawa cocycle}
\[
B \colon \mathsf G \times \Fc_\Delta \rightarrow \mfa
\] 
with the defining property that $gm = k e^{B(g,x)} u$ for all $(g,x)\in \mathsf G \times \Fc_\Delta$, where $m\in\mathsf K$ is an element such that $x = m\Psf_\Delta$. 

For any non-empty subset $\theta\subset\Delta$, let $\pi_\theta:=\pi_{\Delta,\theta}:\Fc_\Delta\to\Fc_\theta$ be the forgetful map. The \emph{$\theta$-Iwasawa cocycle} is the map 
\[B_\theta \colon \mathsf G \times \Fc_\theta \rightarrow \mfa_\theta\] 
defined by $B_\theta(g,x) = p_\theta( B(g,x') )$ for some (all) $x'\in\pi_\theta^{-1}(x)$.
By Quint \cite[Lem.\ 6.1 and 6.2]{quint-ps}, this is a well-defined group cocycle, that is 
$$
B_\theta(gh, x) = B_\theta(g, hx) + B_\theta(h, x)
$$
for all $g,h \in \mathsf G$ and $x \in \Fc_\theta$.

\subsection{Tits representation and the Gromov product}
In the case when $\mathsf G=\SL(d,\Rb)$ and $\mathsf K=\SO(d)$, we may choose $\mathfrak a$ to be the vector space of diagonal, traceless, real-valued, $d\times d$ matrices and $\mathfrak a^+$ to be the set of diagonal matrices in $\mathfrak a$ where the diagonal entries are weakly decreasing down the diagonal. Then $\Delta=\{\alpha_1,\dots,\alpha_{d-1}\}$, where for each $k\in\{1,\dots,d-1\}$, the restricted simple root
\[\alpha_k \colon \mathfrak a\to\Rb\] 
is given by $\alpha_k({\rm diag}(A_1,\dots,A_d))=A_k-A_{k+1}$, and the restricted fundamental weight 
\[\omega_k:=\omega_{\alpha_k} \colon \mathfrak a\to\Rb\]
is given by $\omega_k({\rm diag}(A_1,\dots,A_d))=\sum_{i=1}^kA_i$. If $(i_1,\dots,i_k)$ is an increasing tuple of integers weakly between $1$ and $d-1$, we will also denote $\Fc_{i_1,\dots,i_k}(\Rb^d):=\Fc_{\alpha_1,\dots,\alpha_k}$.

In this case, we may describe the Cartan projection, Jordan projection, and Iwasawa cocycle using linear algebra as well. The Cartan projection $\kappa$ is given by
\[\kappa(g)={\rm diag}(\log \sigma_1(g), \dots, \log \sigma_d(g)),\]
where $\sigma_1(g)\ge\dots\ge\sigma_d(g)$ denote the singular values of $g$ in the standard inner product on $\Rb^d$. The Jordan projection $\lambda$ is given by
\[\lambda(g)={\rm diag}(\log \lambda_1(g), \dots, \log \lambda_d(g)),\]
where $\lambda_1(g)\ge\dots\ge\lambda_d(g)$ denote the moduli of the (generalized) eigenvalues of $g$. Finally, the Iwasawa cocycle satisfies the defining property that for all $k\in\{1,\dots,d-1\}$,
\[\omega_k(B(g,x))=\log\frac{\norm{g(v_1)\wedge\dots\wedge g(v_k)}_k}{\norm{v_1,\dots,v_k}_k}\]
where $v_1,\dots,v_k\in\Rb^d$ is a basis for $x^k$  and $\norm{\cdot}_k$ is the norm on $\wedge^k\Rb^d$ induced by the standard inner product on $\Rb^d$.

Returning to the situation for general $\mathsf G$, we have the following well-known proposition, which gives a convenient way to linearize $\mathsf G$. 

\begin{proposition}\label{prop:reduction to the linear case} For any $\alpha\in\Delta$, let $\theta_\alpha:=\{\alpha,\iota^*(\alpha)\}$. There exists an irreducible linear representation $\Phi_\alpha \colon \mathsf G \rightarrow \mathsf{SL}(d_\alpha,\Rb)$ that sends $\mathsf K$ to $\mathsf{SO}(d_\alpha)$, a $\Phi_\alpha$-equivariant smooth embedding
$$
\xi_\alpha \colon \Fc_{\theta_\alpha} \to \Fc_{1,d_\alpha-1}(\Rb^{d_\alpha}),
$$
and a positive integer $N_\alpha$ such that: 
\begin{enumerate}
 \item $x_1, x_2 \in \Fc_{\theta_\alpha}$ are transverse if and only if $\xi_\alpha(x_1),\xi_\alpha(x_2)\in\Fc_{1,d_\alpha-1}(\Rb^{d_\alpha})$ are transverse.
 \item For all $g \in \mathsf G$,
\[\omega_1\big(\kappa(\Phi_\alpha(g))\big) =N_\alpha\omega_\alpha\big(\kappa(g)\big),\quad\omega_1\big(\lambda(\Phi_\alpha(g))\big)  =N_\alpha\omega_\alpha\big(\lambda(g)\big),\]
\[\omega_{d-1}(\kappa(\Phi_\alpha(g))) =N_\alpha \omega_{\iota^*(\alpha)}(\kappa(g))\quad\text{and}\quad\omega_{d-1}(\lambda(\Phi_\alpha(g)))  = N_\alpha \omega_{\iota^*(\alpha)}(\lambda(g)).\]
\item For all $(g,x)\in\mathsf G\times\Fc_{\theta_\alpha}$, 
\[\omega_1\big(B_{1,d_\alpha-1}(\Phi_\alpha(g),\xi_\alpha(x))\big) = N_\alpha \omega_\alpha\big(B_{\theta_\alpha}(g,x)\big).\]
\item For all $g\in\mathsf G$, $\alpha_1(\kappa(\Phi_\alpha(g))) = \alpha( \kappa(g))$ and $\alpha_1(\lambda(\Phi_\alpha(g))) = \alpha( \lambda(g))$. 
\item If $g\in\mathsf G$ satisfies $\alpha(\kappa(g))>0$ and $(\iota^*\alpha)(\kappa(g)) > 0$, then
$$
\xi_\alpha( U_{\theta_\alpha}(g)) = U_{1,d_\alpha-1}(\Phi_\alpha(g)). 
$$
\end{enumerate}
\end{proposition}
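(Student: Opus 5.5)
This is the standard Plücker/Tits-type construction, so I would build $\Phi_\alpha$ from an irreducible representation whose highest weight is a multiple of $\omega_\alpha$, and then read off each property from the weight structure.

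\emph{Step 1: the representation.} By Tits' theorem, for each $\alpha\in\Delta$ there is an irreducible proximal real representation $\Phi_\alpha\colon\mathsf G\to\SL(d_\alpha,\Rb)$ whose highest restricted weight is $\chi_\alpha=N_\alpha\omega_\alpha$ for some positive integer $N_\alpha$. The highest weight space is one-dimensional. The weights of the form $\chi_\alpha-\beta$ with $\beta$ a simple root occur only for $\beta=\alpha$.

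I would choose a $\mathsf K$-invariant inner product on $\Rb^{d_\alpha}$ for which $\Phi_\alpha(\mathsf A)$ acts by symmetric matrices. After conjugating, $\Phi_\alpha(\mathsf K)\subset\SO(d_\alpha)$ and $\Phi_\alpha(\mfa)$ is diagonal, with the highest weight vector as the first basis vector.

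\emph{Step 2: property (2) and the Iwasawa identity (3).} Write $g=me^{\kappa(g)}\ell$. Then $\sigma_1(\Phi_\alpha(g))=e^{\chi_\alpha(\kappa(g))}$, since $\chi_\alpha$ is the maximal weight on $\mfa^+$. This gives $\omega_1(\kappa(\Phi_\alpha(g)))=N_\alpha\omega_\alpha(\kappa(g))$. The analogous statement for $\lambda$ follows from the Jordan decomposition, since the spectral radius is $e^{\chi_\alpha(\lambda(g))}$.

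The last singular value is governed by the lowest weight $w_0\chi_\alpha$. Hence $\omega_{d-1}=-\log\sigma_d$ evaluated on $\Phi_\alpha(g)$ equals $-w_0\chi_\alpha(\kappa(g))=N_\alpha\omega_{\iota^*\alpha}(\kappa(g))$, and similarly for $\lambda$.

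For (4), the second singular value corresponds to the second-highest weight on $\mfa^+$, which is $\chi_\alpha-\alpha$. Its difference from the top weight is exactly $\alpha(\kappa(g))$.

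For (3), use the Iwasawa decomposition $gm=ke^{B}u$. The unipotent $\Phi_\alpha(u)$ fixes the highest weight line, and $\Phi_\alpha(k)$ is orthogonal. Therefore $\log\|\Phi_\alpha(g)\Phi_\alpha(m)e_1\|/\|\Phi_\alpha(m)e_1\|=\chi_\alpha(B(g,x))$, and this depends only on the $\theta_\alpha$-projection.

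\emph{Step 3: the embedding $\xi_\alpha$, transversality (1), and (5).} Define $\xi_\alpha(g\Psf_{\theta_\alpha})=(\Phi_\alpha(g)\ell^+,\Phi_\alpha(g)H^-)$. Here $\ell^+$ is the highest weight line, and $H^-$ is the sum of all weight spaces other than the lowest one.

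Well-definedness and injectivity amount to two facts: the stabilizer of $\ell^+$ is $\Psf_{\{\alpha\}}$, and the stabilizer of $H^-$ is the opposite parabolic for $\iota^*\alpha$. Their intersection corresponds to $\Psf_{\theta_\alpha}$. Equivariance holds by construction, and smoothness holds because the map is an orbit map.

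For (5), the top singular direction of $\Phi_\alpha(g)$ is $\Phi_\alpha(m)e_1$. The hyperplane spanned by the top $d-1$ directions is $\Phi_\alpha(m)H^-$. These are well defined when $\alpha(\kappa(g))>0$ and $\iota^*\alpha(\kappa(g))>0$, by (4) applied to $g$ and $g^{-1}$.

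The main obstacle is (1), namely that transversality is preserved and reflected. By equivariance, I would reduce to pairs $(\Psf_{\theta_\alpha},g\Psf^-_{\theta_\alpha})$ and use the Bruhat decomposition. Transversality of $\ell^+$ with $\Phi_\alpha(g)H^-$ means that the highest weight vector has a nonzero component along the lowest weight after applying $g$. This is a matrix coefficient, which is nonzero exactly on the open Bruhat cell; that follows from the fact that $\chi_\alpha$ is fixed precisely by the Weyl group of $\Delta\smallsetminus\{\alpha\}$.

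For the second pair, I would apply the same argument with the roles of the two flags exchanged, using the symmetry of $\theta_\alpha$. I expect this Bruhat-cell matrix-coefficient argument to be the delicate step. If it becomes messy, I would cite the standard references (e.g. Guéritaud–Guichard–Kassel–Wienhard) for this characterization.
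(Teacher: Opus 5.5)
Your sketch is correct in substance, and it takes a different route from the paper only in that the paper gives no argument of its own. The paper cites \cite{GGKW} for the existence of $\Phi_\alpha$ and for (1), (2), (4), \cite{quint-ps} for (3), and \cite{CZZ3} for (5). You instead reconstruct that content directly from highest-weight theory: a proximal Tits representation with highest weight $\chi_\alpha=N_\alpha\omega_\alpha$, and an inner product making $\Phi_\alpha(\mathsf K)$ orthogonal and $\Phi_\alpha(\mfa)$ diagonal. From the weights you then read off singular values, moduli of eigenvalues, the Iwasawa cocycle, $U_{1,d_\alpha-1}$ and Bruhat cells. This makes the statement self-contained and shows exactly which representation-theoretic facts are used. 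The citation route is much shorter and hands off the Bruhat-cell bookkeeping in (1), which, as you say, is the only delicate point.

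Three things need fixing when you write it out.

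First, the stabilizer of $H^-$ is the \emph{standard} parabolic $\Psf_{\{\iota^*(\alpha)\}}$, not an opposite one. $H^-$ is the kernel of a highest weight vector of the dual representation, whose highest weight is $-w_0\chi_\alpha=N_\alpha\omega_{\iota^*(\alpha)}$; equivalently, $\mathfrak u_\Delta$ cannot move any weight vector onto the lowest weight line. With an opposite parabolic, $\Psf_{\theta_\alpha}\supset\mathsf U$ would not preserve $H^-$, so $\xi_\alpha$ would not be well defined. With the correct identification you get $\Psf_{\{\alpha\}}\cap\Psf_{\{\iota^*(\alpha)\}}=\Psf_{\theta_\alpha}$, as you want.

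Second, in (1) a single matrix coefficient detects transversality in $\Fc_{\{\alpha\}}$, not in $\Fc_{\theta_\alpha}$. Take $e_1,e_{d_\alpha}$ to be highest and lowest weight vectors. Then $\langle\Phi_\alpha(g)e_1,e_{d_\alpha}\rangle\neq0$ if and only if $g\in\Psf_\Delta w_0\Psf_{\{\alpha\}}$. The condition $\ell^+\not\subset\Phi_\alpha(g)H^-$ is $\langle\Phi_\alpha(g^{-1})e_1,e_{d_\alpha}\rangle\neq0$, i.e.\ $g\in\Psf_{\{\alpha\}}w_0\Psf_\Delta$; note it involves $g^{-1}$, not $g$ as you wrote. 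To see that the two conditions together are equivalent to $g\in\Psf_{\theta_\alpha}w_0\Psf_{\theta_\alpha}$, you need $W_{\Delta\smallsetminus\{\alpha\}}\cap W_{\Delta\smallsetminus\{\iota^*(\alpha)\}}=W_{\Delta\smallsetminus\theta_\alpha}$, where $W_I$ is the subgroup generated by the reflections in $I$. You also need $w_0W_Iw_0=W_{\iota^*(I)}$ and the stabilizer of $\chi_\alpha$ in $W$, which you already invoke.

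Third, (4) uses more than your observation that $\chi_\alpha-\beta$ is a weight only for $\beta=\alpha$. You need every weight other than $\chi_\alpha$ to be at most $\chi_\alpha-\alpha$ on $\mfa^+$. This holds because the module is generated from the (one-dimensional) highest weight space by the negative simple root spaces, so every other weight is reached through $\chi_\alpha-\alpha$.
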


\begin{proof} The existence of such a representation and properties (1), (2), and (4) follow from the discussion in~\cite[Sec.\ 3]{GGKW}. More precisely, they are special cases of \cite[Prop.\ 3.3(b)]{GGKW}, \cite[Prop.\ 2.3]{GGKW}, and \cite[Prop.\ 3.7(3)]{GGKW} (also see \cite[Lem.\ 2.13]{smilga}) respectively. 
Property (3) follows from~\cite[Lem.\ 6.4]{quint-ps} and Property (5) follows from~\cite[Lem.\ B.5]{CZZ3}. 
\end{proof}

We refer to any representation $\Phi_\alpha$ satisfying the conclusions of Proposition \ref{prop:reduction to the linear case} as a \emph{Tits representation associated to $\alpha$}.

\subsubsection{Gromov product}
Using the Tits representations, we will now define the Gromov product. Let $\theta\subset\Delta$ be non-empty and symmetric. 
For all $\alpha\in\theta$, let $\theta_\alpha:=\{\alpha,\iota^*(\alpha)\}$, and let $\pi_{\theta_\alpha}:=\pi_{\theta,\theta_\alpha} \colon \Fc_\theta\to\Fc_{\theta_\alpha}$ 
denote the forgetful map. Moreover, let $\Phi_\alpha$ and $\xi_\alpha$ be a Tits representation associated to  $\alpha$ and its corresponding limit map $\xi_\alpha$.

 Following Sambarino \cite[pp.\ 484]{sambarino-quantitative}, the \emph{$\theta$-Gromov product}
\[G_\theta\colon \Fc_\theta^{(2)}\to\mathfrak a_\theta^+\]
is the map with the defining property that for all $\alpha\in\theta$ and all $(x_1,x_2)\in\Fc_\theta^{(2)}$, we have
\[\omega_\alpha(G_\theta(x_1,x_2))=\frac{1}{N_\alpha}\log\frac{\norm{f}_\alpha^*\norm{v}_\alpha}{\abs{f(v)}},\]
where $f\in(\Rb^{d_\alpha})^*$ is some (any) covector whose kernel is $\xi_\alpha(\pi_{\theta_\alpha}(x_1))^{d_\alpha-1}$, $v\in\Rb^{d_\alpha}$ is some (any) non-zero vector in $\xi_\alpha(\pi_{\theta_\alpha}(x_2))^1$, and $\norm{\cdot}_\alpha$ and $\norm{\cdot}_\alpha^*$ respectively denote the norms on $\Rb^{d_\alpha}$  and $(\Rb^{d_\alpha})^*$ induced by the standard inner product on $\Rb^{d_\alpha}$. By Proposition \ref{prop:reduction to the linear case} (1), this quantity is a real number.

 Also, for all $g\in\Gsf$ and all $(x_1,x_2)\in\Fc_\theta^{(2)}$, we have
\[G_\theta(g(x_1),g(x_2))-G_\theta(x_1,x_2)=\iota(B_\theta(g,x_1))+B_\theta(g,x_2)\]
(see \cite[Lem.\ 7.9]{sambarino-quantitative}). Furthermore, $G_\theta$ is continuous and proper. Also, if $\alpha\in\theta'\subset\theta$, then 
\[\omega_\alpha(G_\theta(x_1,x_2))=\omega_\alpha(G_{\theta'}(\pi_{\theta,\theta'}(x_1),\pi_{\theta,\theta'}(x_2))),\]
where $\pi_{\theta,\theta'}\colon \Fc_\theta\to\Fc_{\theta'}$ is the forgetful map.

\begin{remark}
The Gromov product that we use here is negative of the one used by Sambarino (cf.\ \cite[pp.\ 484]{sambarino-quantitative}).
\end{remark}

\subsubsection{Linearization via Tits representations}
Often, the Tits representations allow us to reduce the proof of statements about $\Gsf$ to their linear algebraic counterparts. The following corollary is an example of this that will be useful later.

\begin{corollary}\label{subadditive}
For any $\alpha\in\theta$ and $g,h_1, h_2\in\Gsf$, 
\[-\omega_{\iota^*(\alpha)}(\kappa(h_1)+\kappa(h_2))\le\omega_\alpha(\kappa(h_1g h_2)-\kappa(g))\le\omega_\alpha(\kappa(h_1)+\kappa(h_2)).\]
\end{corollary}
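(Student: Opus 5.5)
We reduce to linear algebra through the Tits representation $\Phi_\alpha\colon \Gsf\to\SL(d_\alpha,\Rb)$ of Proposition~\ref{prop:reduction to the linear case}, and then use submultiplicativity of the operator norm. By Proposition~\ref{prop:reduction to the linear case}(2), for every $g\in\Gsf$,
\[
N_\alpha\,\omega_\alpha(\kappa(g))=\omega_1(\kappa(\Phi_\alpha(g)))=\log\norm{\Phi_\alpha(g)},
\]
where $\norm{\cdot}$ is the operator norm for the standard inner product (the largest singular value). In the same way,
\[
N_\alpha\,\omega_{\iota^*(\alpha)}(\kappa(g))=\omega_{d_\alpha-1}(\kappa(\Phi_\alpha(g))).
\]
Since $\det\Phi_\alpha(g)=1$, this last quantity equals $-\log\sigma_{d_\alpha}(\Phi_\alpha(g))=\log\norm{\Phi_\alpha(g)^{-1}}=\log\norm{\Phi_\alpha(g^{-1})}$. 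Alternatively, one can use $\omega_{\iota^*(\alpha)}(\kappa(h))=\omega_\alpha(\iota\kappa(h))=\omega_\alpha(\kappa(h^{-1}))$, which follows from $\iota(\kappa(g))=\kappa(g^{-1})$.

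For the upper bound, submultiplicativity and the homomorphism property of $\Phi_\alpha$ give
\[
\log\norm{\Phi_\alpha(h_1gh_2)}\le \log\norm{\Phi_\alpha(h_1)}+\log\norm{\Phi_\alpha(g)}+\log\norm{\Phi_\alpha(h_2)}.
\]
Dividing by $N_\alpha$ yields $\omega_\alpha(\kappa(h_1gh_2))\le \omega_\alpha(\kappa(h_1))+\omega_\alpha(\kappa(g))+\omega_\alpha(\kappa(h_2))$, which is the right-hand inequality.

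For the lower bound, write $g=h_1^{-1}(h_1gh_2)h_2^{-1}$ and apply the upper bound just proved:
\[
\omega_\alpha(\kappa(g))\le \omega_\alpha(\kappa(h_1^{-1}))+\omega_\alpha(\kappa(h_1gh_2))+\omega_\alpha(\kappa(h_2^{-1})).
\]
By $\kappa(h^{-1})=\iota(\kappa(h))$ and $\omega_\alpha\circ\iota=\iota^*(\omega_\alpha)=\omega_{\iota^*(\alpha)}$, we get $\omega_\alpha(\kappa(h_i^{-1}))=\omega_{\iota^*(\alpha)}(\kappa(h_i))$. Rearranging gives the left-hand inequality.

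The only point needing care is the identity $\iota^*\omega_\alpha=\omega_{\iota^*(\alpha)}$. It holds because $\iota=-w_0$ permutes the simple roots by $\alpha\mapsto\iota^*(\alpha)$ and preserves the Killing form, so it permutes the dual fundamental weights in the same way. Alternatively, this step can be bypassed entirely using the linear identity $\omega_{d_\alpha-1}(\kappa(A))=\omega_1(\kappa(A^{-1}))$ for $A\in\SL(d_\alpha,\Rb)$, together with the $\omega_{d-1}$ clause of Proposition~\ref{prop:reduction to the linear case}(2).
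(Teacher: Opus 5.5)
Your proposal is correct and follows essentially the same route as the paper. Both arguments use the Tits representation $\Phi_\alpha$ and Proposition~\ref{prop:reduction to the linear case}(2) to reduce the statement to
\[\sigma_{d_\alpha}(\Phi_\alpha(h_1))\,\sigma_{d_\alpha}(\Phi_\alpha(h_2))\le \frac{\sigma_1(\Phi_\alpha(h_1gh_2))}{\sigma_1(\Phi_\alpha(g))}\le\sigma_1(\Phi_\alpha(h_1))\,\sigma_1(\Phi_\alpha(h_2)).\]
The paper cites the minimax characterization of singular values for both bounds, while you get the lower bound by applying submultiplicativity to $g=h_1^{-1}(h_1gh_2)h_2^{-1}$, which amounts to the same thing.
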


\begin{proof}
By Proposition \ref{prop:reduction to the linear case} (2), the required inequality is equivalent to
\[\sigma_d(\Phi_\alpha(h_1))\sigma_d(\Phi_\alpha(h_2))\le\frac{\sigma_1(\Phi_\alpha(h_1gh_2))}{\sigma_1(\Phi_\alpha(g))}\le\sigma_1(\Phi_\alpha(h_1))\sigma_1(\Phi_\alpha(h_2)),\]
which is a consequence of the ``minimax" definition of singular values.
\end{proof}

\subsection{Divergent, transverse, Anosov, and relatively Anosov subgroups} 
A subgroup $\Gamma\subset\mathsf G$ is \emph{$\Psf_\theta$-divergent} if $\alpha(\kappa(\gamma_n))\to\infty$ for any $\alpha\in\theta$ and any sequence $(\gamma_n)$ in $\Gamma$ of pairwise distinct elements. Note that all $\Psf_\theta$-divergent subgroups are discrete. If $\Gamma$ is $\Psf_\theta$-divergent, the \emph{$\theta$-limit set} $\Lambda_\theta(\Gamma)$ of $\Gamma$ is the set of limit points in $\Fc_\theta$ of $\{U_\theta(\gamma):\gamma\in\Gamma\}$. One can verify that $\Lambda_\theta(\Gamma)$ is a closed, $\Gamma$-invariant subset of $\Fc_\theta$. We say that $\Gamma$ is {\em non-elementary} if $\Lambda_\theta(\Gamma)$ is infinite. A $\Psf_\theta$-divergent subgroup $\Gamma\subset\mathsf G$ is \emph{$\Psf_\theta$-transverse} if $\Lambda_\theta(\Gamma)$ is a transverse subset of $\Fc_\theta$, i.e.\ distinct pairs of flags in $\Lambda_\theta(\Gamma)$ are transverse. 

The action of a discrete group $\Gamma_0$ on a compact metric space $X$ is said to be a (discrete) {\em convergence group action} if for any sequence $(\gamma_n)$ of distinct elements in $\Gamma_0$, there are points $x,y\in X$ and a subsequence $(\gamma_{n_j})$ such that  the sequence of maps $\gamma_{nj}|_{X\smallsetminus\{y\}} \colon X\smallsetminus\{y\}\to X$ converges uniformly on compact subsets to the constant map whose image is $x$.

\begin{proposition}[{\cite[Sec.\ 5.1]{KLP1}, \cite[Prop.\ 3.3]{CZZ2}}]\label{prop: convergence group}
If $\Gamma$ is $\Psf_\theta$-transverse, then $\Gamma$ acts on $\Lambda_\theta(\Gamma)$ as a convergence group. 
In particular, if $\Gamma$ is non-elementary, then $\Gamma$ acts on $\Lambda_\theta(\Gamma)$ minimally, and $\Lambda_\theta(\Gamma)$ is perfect. 
\end{proposition}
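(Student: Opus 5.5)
The plan is to reduce everything to the standard theory of convergence group actions, using Proposition~\ref{prop:characterizing convergence in general} to produce the north--south dynamics on $\Lambda_\theta(\Gamma)$. Fix a sequence $(\gamma_n)$ of distinct elements of $\Gamma$. Since $\Gamma$ is $\Psf_\theta$-divergent, $\alpha(\kappa(\gamma_n))\to\infty$ for every $\alpha\in\theta$. By compactness of $\Fc_\theta$ and $\Fc_{\iota^*(\theta)}$, we may pass to a subsequence along which $U_\theta(\gamma_n)\to x^+$ and $U_{\iota^*(\theta)}(\gamma_n^{-1})\to x^-$.

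The two limit points lie in the right places. By definition $x^+\in\Lambda_\theta(\Gamma)$. Since $\iota(\kappa(g))=\kappa(g^{-1})$, the inverse sequence $(\gamma_n^{-1})$ is $\Psf_{\iota^*(\theta)}$-divergent, so $x^-\in\Lambda_{\iota^*(\theta)}(\Gamma)$. I would identify $\Lambda_{\iota^*(\theta)}(\Gamma)$ with $\Lambda_\theta(\Gamma)$ via the natural identification (for symmetric $\theta$ one has $\Fc_{\iota^*(\theta)}=\Fc_\theta$). Let $y\in\Lambda_\theta(\Gamma)$ be the point corresponding to $x^-$. Proposition~\ref{prop:characterizing convergence in general}(2) then says that $\gamma_n\to x^+$ uniformly on compact subsets of $\Fc_\theta\smallsetminus\mathcal Z_{x^-}$.

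Now transversality does the work. Every $z\in\Lambda_\theta(\Gamma)\smallsetminus\{y\}$ is transverse to $x^-$, so
\[
\Lambda_\theta(\Gamma)\smallsetminus\{y\}\subset \Fc_\theta\smallsetminus\mathcal Z_{x^-}.
\]
A compact subset of $\Lambda_\theta(\Gamma)\smallsetminus\{y\}$ is therefore a compact subset of $\Fc_\theta\smallsetminus\mathcal Z_{x^-}$, and we get uniform convergence to the constant map $x^+$ there. By $\Gamma$-invariance of $\Lambda_\theta(\Gamma)$, the maps restrict to $\Lambda_\theta(\Gamma)$. This gives the convergence group property.

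The main subtle point is to handle the identification of $\Lambda_{\iota^*(\theta)}$ with $\Lambda_\theta$ so that ``transverse to $x^-$'' is meaningful. Here I would use that $U_{\iota^*(\theta)}(g)$ corresponds to $U_\theta(g)$ under the standard identification for symmetric $\theta$.

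For the ``in particular'', I would invoke standard facts about convergence groups (Tukia, Bowditch) on a compact metrizable space. If the limit set of the convergence action is infinite, the action is minimal on it and that set is perfect. It remains to check that the convergence-theoretic limit set equals all of $\Lambda_\theta(\Gamma)$. Every point of $\Lambda_\theta(\Gamma)$ is some $\lim U_\theta(\gamma_n)$, and by the above it is an attracting point $x^+$ of a convergence subsequence, so it lies in the dynamical limit set. Minimality and perfectness then follow from the classical results.
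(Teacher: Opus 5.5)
Your argument is correct and is essentially the standard proof in the references the paper cites; the paper itself gives no proof. The argument runs: pass to a subsequence with $U_\theta(\gamma_n)\to x^+$ and $U_\theta(\gamma_n^{-1})\to x^-$, apply Proposition~\ref{prop:characterizing convergence in general}(2), use transversality of $\Lambda_\theta(\Gamma)$ to get $\Lambda_\theta(\Gamma)\smallsetminus\{x^-\}\subset\Fc_\theta\smallsetminus\mathcal Z_{x^-}$, and then quote the classical convergence-group results (Tukia, Bowditch) for minimality and perfectness once the dynamical limit set is identified with $\Lambda_\theta(\Gamma)$. The step you flag as the subtle point is not actually an issue: since $\theta$ is symmetric, $\Fc_{\iota^*(\theta)}=\Fc_\theta$ and $U_{\iota^*(\theta)}=U_\theta$ literally, so the ``identification'' is an equality and needs no argument.
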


If a group $\Gamma_0$ acts on a metric space $X$ as a convergence group, we say that a point $x\in X$  is a 
\begin{itemize}
\item {\em conical limit point} if  there exist distinct $a,b\in X$ and a sequence
$(\gamma_n)$ in $\Gamma_0$ so that $\gamma_n(x)\to a$ and $\gamma_n(y)\to b$ for all $y\in X\smallsetminus\{x\}$.  
\item {\em bounded parabolic point} if its stabilizer in $\Gamma$ acts properly and cocompactly on $X\smallsetminus\{x\}$.
\end{itemize}

A $\Psf_\theta$-transverse subgroup $\Gamma\subset\mathsf G$ is \emph{$\Psf_\theta$-Anosov} (respectively, \emph{$\Psf_\theta$-relatively Anosov}) if every point in $\Lambda_\theta(\Gamma)$ is a conical limit point (respectively, either a conical limit point or a bounded parabolic point) for the $\Gamma$-action. 

This is not the original definition of an Anosov group (given in \cite{labourie-invent, GW}) but can be deduced from the characterizations in~\cite{GGKW,KLP1}. Further, the equivalence of this definition and the original one follows directly from ~\cite[Thm.\ 1.1]{tsouvalas-gaps} and Proposition~\ref{prop:characterizing convergence in general}. For a discussion of the various definitions of relatively Anosov groups, see \cite[Sec.\ 4]{ZZ1}.

Observe that if a subgroup is $\Psf_\theta$-divergent (respectively, $\Psf_\theta$-transverse, $\Psf_\theta$-Anosov, $\Psf_\theta$-relatively Anosov) then it is $\Psf_{\iota^*(\theta)}$-divergent (respectively, $\Psf_{\iota^*(\theta)}$-transverse, $\Psf_{\iota^*(\theta)}$-Anosov, $\Psf_{\iota^*(\theta)}$-relatively Anosov), and is also $\Psf_{\theta'}$-divergent (respectively, $\Psf_{\theta'}$-transverse, $\Psf_{\theta'}$-Anosov, $\Psf_{\theta'}$-relatively Anosov) for all non-empty $\theta'\subset\theta$.

\subsection{Patterson--Sullivan measures for divergent groups} \label{sec:PS measures for divergent grps}
Let $\theta \subset \Delta$ be a non-empty symmetric subset, let $\Gamma\subset\mathsf G$ be a $\Psf_\theta$-divergent group, and let $\phi\in\mathfrak a_\theta^*$. A \emph{$\phi$-Patterson--Sullivan measure for $\Gamma$ of dimension $\beta$} is a probability measure $\mu$ supported on $\Lambda_\theta(\Gamma)$ such that for any $\gamma \in \Gamma$, the measures $\mu$ and $\gamma_*\mu$ are absolutely continuous and 
$$
\frac{d\gamma_*\mu}{d\mu}(x)=e^{-\beta\phi(B_\theta(\gamma^{-1},x))}
$$
for $\mu$-almost every $x\in\Lambda_\theta(\Gamma)$. 
Note that as finite Borel measures on a metric space, $\mu$ and $\gamma_*\mu$ are necessarily regular. 

For any $\phi\in\mfa_\theta^*$, the \emph{$\phi$-Poincar\'e series for $\Gamma$} is
$$Q_\Gamma^\phi(s):=\sum_{\gamma\in\Gamma} e^{-s\phi(\kappa_\theta(\gamma))}.$$
The \emph{$\phi$-critical exponent of $\Gamma$}, denoted $\delta^\phi_\Gamma$, is the critical exponent of $Q_\Gamma^\phi(s)$, i.e. 
\[
\delta^\phi_\Gamma:=\inf\{s > 0:Q_\Gamma^\phi(s)<+\infty\}.
\]
For any discrete subgroup $\Gamma\subset\mathsf G$, the \emph{$\theta$-Benoist limit cone} of $\Gamma$ is the set of points in $\mathfrak a_\theta^+$ that are limits of sequences of the form $(t_n\kappa_\theta(\gamma_n))$, where $(\gamma_n)$ is a distinct sequence in $\Gamma$ and $(t_n)$ is a decreasing sequence of real numbers that converge to $0$.

\begin{proposition}\label{prop: Patterson-Sullivan}
If $\theta \subset \Delta$ is non-empty and symmetric, $\Gamma\subset\mathsf G$ is $\Psf_\theta$-divergent, $\phi\in \mfa^*_\theta$ and $\delta^\phi_\Gamma < +\infty$, then there is a $\phi$-Patterson--Sullivan measure $\mu$ for $\Gamma$ of dimension $\delta^\phi_\Gamma$.
\end{proposition}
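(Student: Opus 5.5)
The plan is the standard Patterson construction, run with the Cartan projection and the Iwasawa cocycle $B_\theta$ in place of the distance function and Busemann function. Write $\delta:=\delta^\phi_\Gamma$.

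\emph{Step 1: approximating measures.} If $Q_\Gamma^\phi(\delta)=+\infty$, then for each $s>\delta$ we set
\[
\mu_s:=\frac{1}{Q_\Gamma^\phi(s)}\sum_{\gamma\in\Gamma}e^{-s\phi(\kappa_\theta(\gamma))}\,\mc D_{U_\theta(\gamma)}.
\]
If instead the series converges at $\delta$, we use Patterson's trick. We choose an increasing function $h\colon[0,\infty)\to[1,\infty)$ which grows subexponentially, in the sense that for every $\epsilon>0$ there is $T_\epsilon$ with $h(t+u)\le e^{\epsilon u}h(t)$ for all $t\ge T_\epsilon$ and $u\ge 0$. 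We choose $h$ so that $\sum_\gamma h(\phi(\kappa_\theta(\gamma)))e^{-s\phi(\kappa_\theta(\gamma))}$ still has critical exponent $\delta$ but diverges at $s=\delta$, and then insert this weight into the definition of $\mu_s$. Some care is needed because $\phi(\kappa_\theta(\gamma))$ could be negative for finitely many $\gamma$. Finiteness of the critical exponent together with divergence forces $\phi(\kappa_\theta(\gamma))\to+\infty$, since otherwise infinitely many terms would be bounded below; so only finitely many terms are affected, and they do not matter in the limit. The element $U_\theta(\gamma)$ is well defined for all but finitely many $\gamma$ because $\Gamma$ is $\Psf_\theta$-divergent, and we fix any choice for the rest. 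Since $\Fc_\theta$ is compact, there is a sequence $s_n\searrow\delta$ along which $\mu_{s_n}$ converges weakly to a probability measure $\mu$. Divergence at $\delta$ means that the total mass escapes to infinity, so the contribution of any finite set of $\gamma$ tends to $0$. Hence $\mu$ is supported on the accumulation points of $\{U_\theta(\gamma)\}$, which is $\Lambda_\theta(\Gamma)$.

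\emph{Step 2: quasi-invariance.} For fixed $\eta\in\Gamma$ we reindex:
\[
\eta_*\mu_s=\frac{1}{Q}\sum_\gamma e^{-s\phi(\kappa_\theta(\eta^{-1}\gamma))}\,\mc D_{\eta U_\theta(\eta^{-1}\gamma)}.
\]
By Corollary~\ref{cor:equivariance of U_theta}, $\eta U_\theta(\eta^{-1}\gamma)$ and $U_\theta(\gamma)$ are asymptotically close as $\gamma\to\infty$. The key estimate is then
\[
\phi(\kappa_\theta(\eta^{-1}\gamma))-\phi(\kappa_\theta(\gamma))-\phi\big(B_\theta(\eta^{-1},U_\theta(\gamma))\big)\to0\quad\text{as }\gamma\to\infty.
\]
We prove it by writing $\phi$ in the basis $\{\omega_\alpha\}_{\alpha\in\theta}$ and linearizing through the Tits representations of Proposition~\ref{prop:reduction to the linear case}, using parts (2), (3) and (5). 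This reduces the estimate to the linear-algebra fact
\[
\log\frac{\sigma_1(hg)}{\sigma_1(g)}-\log\frac{\|hv\|}{\|v\|}\to0
\]
for $v$ in the top singular direction $U_1(g)$, which holds whenever $\sigma_1(g)/\sigma_2(g)\to\infty$. That gap condition is exactly $\alpha_1(\kappa(\Phi_\alpha(\gamma)))=\alpha(\kappa(\gamma))\to\infty$, which is guaranteed by divergence.

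Combining this estimate with the continuity of $B_\theta$ and uniform continuity on compact $\Fc_\theta$, and controlling the weight ratio $h(\cdot)$ by the subexponential property, we obtain for any continuous $f$
\[
\int f\,d\eta_*\mu=\int f(x)\,e^{-\delta\phi(B_\theta(\eta^{-1},x))}\,d\mu(x).
\]
This is the required Radon--Nikodym identity, and in particular gives mutual absolute continuity of $\mu$ and $\eta_*\mu$.

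The main obstacle is the asymptotic cocycle estimate in Step 2. It needs uniformity, both in passing from $U_\theta(\gamma)$ to nearby points and in the finitely many $\alpha\in\theta$, and it must handle the non-uniqueness of $U_\theta(\gamma)$ for small $\gamma$. The Tits representation reduction is where I would concentrate the effort; the remaining steps are the usual Patterson bookkeeping.
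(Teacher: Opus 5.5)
Your proposal is correct. It is the standard Patterson construction, using Patterson's slowly growing weight in the convergence-type case. The only Lie-theoretic input is the asymptotic cocycle estimate $\phi(\kappa_\theta(\eta^{-1}\gamma))-\phi(\kappa_\theta(\gamma))-\phi(B_\theta(\eta^{-1},U_\theta(\gamma)))\to 0$. You correctly reduce it, via the Tits representations, to the singular-value comparison $\sigma_1(hg)/\sigma_1(g)\approx\|hv\|/\|v\|$ for $v\in U_1(g)$ when $\sigma_1(g)/\sigma_2(g)\to\infty$.

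The paper gives no proof of this proposition; it is recalled as background from the Patterson--Sullivan literature, where it is proved by exactly this argument. So there is nothing further to compare.

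One point you should make explicit is that $\Gamma$ must be infinite. For finite $\Gamma$ one has $\Lambda_\theta(\Gamma)=\varnothing$, so the statement is false as written. Your Step 1 uses infiniteness implicitly, both in Patterson's trick and in the escape of mass to infinity. Finiteness of the critical exponent alone, not $\Psf_\theta$-divergence, is what makes $\phi\circ\kappa_\theta$ proper on $\Gamma$.
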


If $\Gamma\subset\mathsf G$ is $\Psf_\theta$-transverse, and $\phi\in\mathfrak a_\theta^*$ satisfies $\delta^\phi_\Gamma<+\infty$ and 
$Q^\phi_\Gamma(\delta^\phi_\Gamma)=+\infty$, we say that $(\Gamma,\phi)$ is \emph{of divergence type}. The Hopf--Tsuji--Sullivan dichotomy (see \cite[Thm.\ 1.3]{BCZZ1} or \cite[Thm. 1.9]{KOW}) implies that if $(\Gamma,\phi)$ is of divergence type, then there is a unique $\phi$-Patterson--Sullivan measure for $\Gamma$ of dimension $\delta^\phi_\Gamma$, 
which we denote by $\mu^\phi_\Gamma$. 

When $\Gamma$ is $\Psf_\theta$-relatively Anosov, $\phi\in\mathfrak a_\theta^*$, and $\delta^\phi_\Gamma < +\infty$,  then $(\Gamma,\phi)$ is of divergence type, by \cite[Thm.\ 1.1]{CZZ4}.

\subsection{BMS measures}\label{BMS}
Let $\Gamma\subset\mathsf G$ be a $\Psf_\theta$-transverse subgroup, and let $\phi\in\mathfrak a_\theta^*$ be such that $(\Gamma,\phi)$ is of  divergence type.
Set $\bar\phi:=\iota^*(\phi)$. Since $\phi(\kappa_\theta(\gamma))=\bar \phi(\kappa_\theta(\gamma^{-1}))$, note that $\delta^\phi_\Gamma=\delta^{\bar\phi}_\Gamma$ and the pair $(\Gamma,\bar\phi)$ is also of divergence type.
Denote $\delta:=\delta^\phi_\Gamma=\delta^{\bar\phi}_\Gamma$, $\mu:=\mu^\phi_\Gamma$, and $\bar\mu:=\mu^{\bar\phi}_\Gamma$. 

Let $\Lambda_\theta(\Gamma)^{(2)}$ denote the set of distinct pairs of points in $\Lambda_\theta(\Gamma)$. Observe that the topological space 
\[\widetilde{\mathsf U}_\Gamma:=\Lambda_\theta(\Gamma)^{(2)}\times \Rb\] 
admits a natural continuous flow $\psi_t$ given by $\psi_t(x,y,s)=(x,y,s+t)$. 
Using $\phi$, we may define a $\Gamma$-action on $\widetilde{\mathsf U}_\Gamma$ as follows:
\[
\gamma\cdot(x,y,s)=\big(\gamma\cdot x,\gamma\cdot y,s+\phi(B_\theta(\gamma,y))\big).
\]
Notice that this $\Gamma$-action on $\widetilde{\mathsf U}_\Gamma$ commutes with the flow $\psi_t$, and is properly discontinuous \cite[Thm.\ 9.1]{KOW}. Thus, $\psi_t$ descends to a flow, also denoted by $\psi_t$, on the Hausdorff space $\mathsf U_\Gamma^\phi:=\Gamma \backslash \widetilde{\mathsf U}_\Gamma$. 

Using $\mu$ and $\bar\mu$, we may define a $\psi_t$-invariant measure on $\mathsf U_\Gamma^\phi$ as follows. Let $\widetilde m$ be the measure on $\widetilde{\mathsf U}_\Gamma$ defined by
\[d\widetilde m(x,y,s):=e^{\delta^\phi_\Gamma\phi(G(x,y))}d\bar\mu(x)d\mu(y)dL(s),\]
where $L$ denotes the usual Lebesgue measure on $\Rb$. One can verify that $\widetilde m$ is both $\psi_t$-invariant and invariant under the $\Gamma$-action on $\widetilde{\mathsf U}_\Gamma$ defined above. As such, it descends to a $\psi_t$-invariant measure $m=m_\Gamma^\phi$ on $\mathsf U_\Gamma^\phi$, called the \emph{$\phi$-Bowen-Margulis-Sullivan (BMS) measure} for $\Gamma$.

Let $\Gamma_{\rm lox}$ denote the set of $\theta$-loxodromic elements in $\Gamma$. When the $\phi$-BMS measure for $\Gamma$ is finite, the following equidistribution result applies.

 \begin{theorem}
 \label{old equidistribution}
Suppose that $\Gamma\subset \mathsf G$ is a $\Psf_\theta$-transverse subgroup, $\phi\in \mathfrak a_\theta^*$,  $(\Gamma,\phi)$ is of divergence type, and the total mass $\norm{m}$ of the $\phi$-BMS measure $m$ for $\Gamma$ is finite. Let $\delta: = \delta_\Gamma^\phi$. Then 
 \[
 \lim_{T \rightarrow \infty} \delta e^{-\delta  T} \sum_{\substack{\gamma \in \Gamma_{\lox}\\ \phi(\lambda_\theta(\gamma))\leq T}}\mc D_{\gamma^-}\otimes \mc D_{\gamma^+} =
  \frac{1}{\norm{m}} e^{\delta  \phi(G_\theta(x,y))} \bar\mu(x)\otimes \mu(y)
 \]
  in the dual to compactly-supported continuous functions on $\mathcal F_\theta^{(2)}$.
\end{theorem}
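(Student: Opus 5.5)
The plan is to follow Roblin's strategy for equidistribution of periods, carried out on the flow space $\mathsf U_\Gamma^\phi$ with the finite BMS measure $m$. \textbf{Step 1} is to show that $\psi_t$ is mixing for $m/\norm{m}$. Ergodicity of $m$ should come from the Hopf--Tsuji--Sullivan dichotomy, since $(\Gamma,\phi)$ is of divergence type. Mixing would then follow from a Babillot-type argument: a non-mixing finite invariant measure yields an eigenfunction of the flow. That eigenfunction in turn forces the $\phi$-length spectrum $\{\phi(\lambda_\theta(\gamma)) : \gamma\in\Gamma_{\lox}\}$ to lie in a discrete subgroup $c\mathbb Z$. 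I would rule this out using the cross-ratio/Gromov product: for a non-elementary transverse group, the quantities $\phi(\lambda_\theta(\gamma\eta^n))-\phi(\lambda_\theta(\gamma))-n\phi(\lambda_\theta(\eta))$ converge to values of the form $\phi(G_\theta(\cdot,\cdot))$-sums. These vary continuously and non-trivially on the perfect set $\Lambda_\theta(\Gamma)$ (Proposition~\ref{prop: convergence group}), which contradicts discreteness.

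\textbf{Step 2} is the local estimate. Fix $(x_0,y_0)\in\Lambda_\theta(\Gamma)^{(2)}$ and small open sets $A\ni x_0$, $B\ni y_0$, and form the box $\widetilde{\mathcal B}=A\times B\times[0,\epsilon]\subset \widetilde{\mathsf U}_\Gamma$. By proper discontinuity, $\widetilde{\mathcal B}$ embeds into $\mathsf U_\Gamma^\phi$ for small $A,B,\epsilon$. Mixing gives
\[
m(\mathcal B\cap \psi_{-t}\mathcal B)\to m(\mathcal B)^2/\norm{m}, \qquad m(\mathcal B)=\epsilon\int_{A\times B} e^{\delta\phi(G_\theta(x,y))}\,d\bar\mu(x)\,d\mu(y).
\]
The intersection $\mathcal B\cap\psi_{-t}\mathcal B$ lifts to $\bigcup_{\gamma}\widetilde{\mathcal B}\cap\gamma\psi_{-t}\widetilde{\mathcal B}$. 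A nonempty piece forces $\gamma A\cap A\neq\emptyset$ and $\gamma B\cap B\neq \emptyset$, with shift $\phi(B_\theta(\gamma,y))\approx t$. For large $t$, Proposition~\ref{prop:characterizing convergence in general} applied to such $\gamma$ gives north--south dynamics. So $\gamma$ is $\theta$-loxodromic with $\gamma^+$ near $B$ and $\gamma^-$ near $A$, the shrinking is by a contraction argument, and $\phi(\lambda_\theta(\gamma))=t+O(\epsilon)+o(1)$. The last estimate uses the cocycle identity for $G_\theta$ evaluated at the fixed pair $(\gamma^-,\gamma^+)$.

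Conversely, each loxodromic $\gamma$ with $(\gamma^-,\gamma^+)$ in slightly shrunken $A\times B$ contributes a piece of measure approximately
\[
e^{\delta\phi(G_\theta)}\,\bar\mu(\gamma A\cap A)\,\mu(\gamma^{-1}B\cap B)\cdot\epsilon.
\]
Using the Patterson--Sullivan quasi-invariance and the Gromov product cocycle, this is $\approx e^{-\delta t}\, m(\mathcal B)$ up to factors $e^{O(\epsilon)}$. Summing over $\gamma$ yields
\[
\#\{\gamma\in\Gamma_{\lox}:(\gamma^-,\gamma^+)\in A\times B,\ \phi(\lambda_\theta(\gamma))\in[t,t+\epsilon]\}\approx \epsilon\, e^{\delta t}\, \frac{m(\mathcal B)}{\epsilon\norm{m}},
\]
with multiplicative errors $e^{O(\epsilon)}$ on each side.

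\textbf{Step 3} integrates in $t$ from $0$ to $T$. This gives $\delta e^{-\delta T}\sum(\cdots)(A\times B)\to \frac{1}{\norm m}\int_{A\times B}e^{\delta\phi(G_\theta)}\,d\bar\mu\,d\mu$ up to $e^{O(\epsilon)}$. Letting $\epsilon\to0$ and approximating compactly supported continuous functions on $\Fc_\theta^{(2)}$ by step functions on such boxes gives the result. Boxes disjoint from $\Lambda_\theta(\Gamma)^{(2)}$ contribute nothing on either side, and boundaries of boxes can be chosen to be $\bar\mu\otimes\mu$-null.

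I expect the main obstacle to be Step 1, namely mixing. It needs the non-arithmeticity of the $\phi$-length spectrum in the transverse, non-cocompact setting, where symbolic dynamics is unavailable. The second delicate point is the precise comparison in Step 2 between $\phi(B_\theta(\gamma,y))$ and $\phi(\lambda_\theta(\gamma))$ uniformly over the box. Here I would use the Gromov product cocycle formula together with continuity of $G_\theta$ near the transverse pair $(x_0,y_0)$.
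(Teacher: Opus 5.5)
Your architecture matches what lies behind the paper's proof, which is entirely by citation. Mixing of $\psi_t$ for $m$ is imported from \cite[Prop.\ 10.4 and Thm.\ 1.3]{BCZZ2} or \cite[Thm.\ 1.1]{KO}. The passage from mixing to equidistribution of $(\gamma^-,\gamma^+)$ is \cite[Thm.\ 6.1]{BCZZ2}, which plays the role of your Steps 2--3.

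\textbf{The gap: non-arithmeticity in Step 1.} The genuine gap is at exactly the point you flag, the non-arithmeticity of $\{\phi(\lambda_\theta(\gamma)):\gamma\in\Gamma_{\lox}\}$. Continuity of the limiting Gromov-product (cross-ratio) expressions, plus density of fixed-point configurations of loxodromics, only shows this: if the spectrum lay in $c\mathbb Z$, these expressions would be \emph{locally constant} on the relevant configurations of points of $\Lambda_\theta(\Gamma)$. Perfectness does not rule this out. $\Lambda_\theta(\Gamma)$ can be a Cantor set, and a non-constant continuous function on a Cantor set can be locally constant. Your reasoning applies verbatim to non-elementary groups acting on simplicial trees. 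Their limit sets are perfect and their cross-ratios are continuous and non-constant, yet their length spectrum lies in $\mathbb Z$.

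Moreover, whether the $\phi$-cross-ratios are non-trivial at all depends on the hypotheses. Take a convex cocompact Fuchsian group embedded diagonally in $\mathsf{PSL}(2,\Rb)\times\mathsf{PSL}(2,\Rb)$, with $\theta=\Delta=\{\alpha,\alpha'\}$ and $\phi=\omega_\alpha-\omega_{\alpha'}$. This group is non-elementary and $\Psf_\theta$-transverse. Yet every $\phi$-length and every value of $\phi\circ G_\theta$ on $\Lambda_\theta(\Gamma)^{(2)}$ is $0$. This case is excluded only by the hypothesis $\delta^\phi_\Gamma<+\infty$ (here $\phi\circ\kappa_\theta\equiv 0$), which your argument never uses. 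So what is missing is input of a genuinely algebraic nature, of the type of Benoist's density theorem for Jordan projections of Zariski dense subgroups, in which finiteness of $\delta^\phi_\Gamma$ must enter. The paper obtains all of this by importing mixing from the results cited above.

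\textbf{Repairs needed in Steps 2--3.} These steps are the standard argument and fine in outline, but need two repairs.
\begin{enumerate}
\item A point of $\widetilde{\mathcal B}\cap\gamma\psi_{-t}\widetilde{\mathcal B}$ has coordinates in $(A\cap\gamma A)\times(B\cap\gamma B)$. It is $\mu(B\cap\gamma B)\approx\mu(\gamma B)\approx e^{-\delta\phi(\lambda_\theta(\gamma))}\mu(B)$ that supplies the decay. Your expression $\bar\mu(\gamma A\cap A)\,\mu(\gamma^{-1}B\cap B)$ is instead comparable to $\bar\mu(A)\mu(B)$.
\item The overlap in the $\Rb$-direction has length roughly $(\epsilon-\abs{\phi(\lambda_\theta(\gamma))-t})_+$. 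So mixing at a fixed $t$ only controls a tent-weighted count, not the count in the window $[t,t+\epsilon]$ with $e^{O(\epsilon)}$ errors. You should integrate $e^{\delta t}m(\mathcal B\cap\psi_{-t}\mathcal B)$ over $t\in[0,T]$ before extracting counts, as Roblin does.
\end{enumerate}
Also, if $\Gamma$ has torsion, $\widetilde{\mathcal B}$ need not embed unless its centre has trivial stabilizer; otherwise stabilizers must be accounted for.
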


\begin{proof} By  \cite[Prop.\ 10.4 and Thm.\ 1.3]{BCZZ2} or \cite[Thm\ 1.1]{KO}, the flow $\psi^t$ is mixing with respect to $m$, and hence the theorem follows from \cite[Thm.\ 6.1]{BCZZ2}. \end{proof}

\section{The proof of Theorem \ref{new equidistribution}}\label{sec:proof of the main theorem}

Let $\theta\subset\Delta$ be non-empty and symmetric. In this section we establish the following generalization of Theorem \ref{new equidistribution}.

\begin{theorem}\label{new equidistribution gen ver}  Suppose that $\Gamma\subset \mathsf G$ is a $\Psf_\theta$-transverse subgroup, $\phi\in \mathfrak a_\theta^*$,  $(\Gamma,\phi)$ is of divergence type, and the total mass $\norm{m}$ of the $\phi$-BMS measure $m$ for $\Gamma$ is finite.  Let $\delta : =\delta^\phi_\Gamma$. 

If $\nu_R$ is the measure on $\Fc_\theta^2$ given by
$$
\nu_R := \delta e^{-\delta R}  \sum_{\substack{\gamma \in \Gamma \\ \phi(\kappa(\gamma)) \leq R}}\mc D_{U_\theta(\gamma^{-1})}\otimes \mc D_{U_\theta(\gamma)},$$ 
then for any continuous function $f \colon \Fc_\theta^2 \rightarrow \Rb$ we have
\[
 \lim_{R \to \infty} \int f\,d\nu_R= \frac{1}{\norm{m}}\int f	 \,d(\bar\mu \otimes \mu).
 \]
In particular, 
$$
\#\{ \gamma \in \Gamma : \phi(\kappa(\gamma)) \leq R\} \sim \frac{1}{\delta \norm{m}}e^{ \delta R}.
$$
\end{theorem}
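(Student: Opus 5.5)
We follow Sambarino's strategy, as suggested by Roblin: the orbital statement is deduced from the equidistribution of periods in Theorem~\ref{old equidistribution}. The measures $\nu_R$ have uniformly bounded mass, which we will get from the comparisons below. Products of open sets are a convergence-determining class, so it suffices to prove the following. For every $(a,b)\in\Lambda_\theta(\Gamma)^2$ and every $\epsilon>0$ there are small open neighborhoods $A\ni a$, $B\ni b$ with $\mu(\partial A)=\bar\mu(\partial B)=0$ such that
\[
\limsup_{R\to\infty}\nu_R(B\times A)\le e^{O(\epsilon)}\,\frac{\bar\mu(B)\mu(A)}{\norm m}
\]
and the matching $\liminf$ lower bound holds. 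Away from $\Lambda_\theta(\Gamma)^2$ there is nothing to prove, since $U_\theta(\gamma^{\pm1})$ accumulate only on the limit set; this uses divergence and Corollary~\ref{cor:equivariance of U_theta}. The argument splits into two cases according to whether $a$ and $b$ are transverse.

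\textbf{Transverse case.} Suppose $b$ is transverse to $a$; since the limit set is transverse, this holds whenever $a\neq b$. The key step is a geometric comparison lemma, proved by linearizing via the Tits representations of Proposition~\ref{prop:reduction to the linear case} and using Proposition~\ref{prop:characterizing convergence in general}. Suppose $A,B$ are small, $\overline A\times\overline B\subset\Fc_\theta^{(2)}$, and $\gamma$ has large $\alpha(\kappa(\gamma))$ with $U_\theta(\gamma)\in A$ and $U_\theta(\gamma^{-1})\in B$. Then $\gamma$ is $\theta$-loxodromic, with $\gamma^+$ within $\epsilon$ of $U_\theta(\gamma)$ and $\gamma^-$ within $\epsilon$ of $U_\theta(\gamma^{-1})$, and
\[
\phi(\lambda_\theta(\gamma))=\phi(\kappa_\theta(\gamma))-\phi(G_\theta(\gamma^-,\gamma^+))+O(\epsilon).
\]
The sign convention here matches the paper's Gromov product. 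Conversely, a loxodromic $\gamma$ with $(\gamma^-,\gamma^+)\in B\times A$ and large length has $(U_\theta(\gamma^{-1}),U_\theta(\gamma))$ $\epsilon$-close to $(\gamma^-,\gamma^+)$. To prove the comparison, write $\gamma=g\,e^{X}g^{-1}$-like after conjugating the transverse pair $(\gamma^+,\gamma^-)$ to the standard one by a bounded $g$. Then compare $\omega_\alpha(\kappa)$ with $\omega_\alpha(\lambda)$ via Corollary~\ref{subadditive} and the explicit linear-algebra formula $\sigma_1(\Phi_\alpha\gamma)\approx\lambda_1\,\norm f\norm v/|f(v)|$. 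The error in this formula is controlled by the gap $\alpha_1$, which tends to infinity by divergence.

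Shrinking $A,B$ so that $\phi(G_\theta)$ varies by at most $\epsilon$ on $B\times A$, we can sandwich the orbital count $\#\{\gamma:\phi(\kappa)\le R,\ (U_\theta(\gamma^{-1}),U_\theta(\gamma))\in B\times A\}$ between period counts over slightly smaller and slightly larger sets, with $T=R-\phi(G_\theta(b,a))\pm\epsilon$. Finitely many small elements are discarded. Theorem~\ref{old equidistribution} evaluates these period counts as $\approx\frac{e^{\delta T}}{\delta\norm m}\int_{B\times A}e^{\delta\phi(G_\theta)}d\bar\mu\, d\mu$. The factor $e^{\delta\phi(G_\theta)}$ cancels against the shift $e^{-\delta\phi(G_\theta(b,a))}$ up to $e^{O(\epsilon)}$, which gives the desired bounds.

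\textbf{Diagonal case $a=b$.} This is the main obstacle, since Theorem~\ref{old equidistribution} only sees $\Fc_\theta^{(2)}$. Our plan is a translation trick. Choose $h\in\Gamma$ with $h a$ transverse to $a$; this is possible because the limit set is perfect and transverse, by Proposition~\ref{prop: convergence group}. Consider the map $\gamma\mapsto h\gamma$. By Corollary~\ref{cor:equivariance of U_theta} we have $U_\theta(h\gamma)\approx hU_\theta(\gamma)$ and $U_\theta((h\gamma)^{-1})\approx U_\theta(\gamma^{-1})$. Moreover
\[
\phi(\kappa(h\gamma))=\phi(\kappa(\gamma))-\phi(B_\theta(h^{-1},hU_\theta(\gamma)))+o(1),
\]
a Busemann-type expansion also obtained via the Tits representations. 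So the count near $(a,a)$ is converted into a count near $(a,ha)$ with a continuously varying shift. We apply the transverse case on a partition fine enough that the shift is nearly constant on each piece. The resulting density is $e^{\delta\phi(B_\theta(h^{-1},\cdot))}\,d(h_*\mu)$, which equals $d\mu$ by the Patterson--Sullivan quasi-invariance, so the formula transfers back.

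Taking $f\equiv1$ yields the counting asymptotic, because the limit measure has mass $1/\norm m$ and $\bar\mu\otimes\mu$ is a product of probability measures.
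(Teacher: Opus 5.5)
The gap is in the converse half of your comparison lemma, i.e.\ the claim that a loxodromic $\gamma$ with $(\gamma^-,\gamma^+)\in B\times A$ and large length has $U_\theta(\gamma^{\pm1})$ close to $\gamma^{\pm}$ and $\phi(\kappa_\theta(\gamma))\approx\phi(\lambda_\theta(\gamma))+\phi(G_\theta(\gamma^-,\gamma^+))$. Your mechanism cannot prove this. You conjugate $(\gamma^+,\gamma^-)$ to the standard pair by a bounded $g$ and then use $\sigma_1(\Phi_\alpha\gamma)\approx\lambda_1\norm{f}\norm{v}/\abs{f(v)}$, with an error controlled by a root gap that divergence makes large. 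But after conjugation $g^{-1}\gamma g$ lies in the Levi group $\Psf_\theta\cap\Psf_\theta^-$, not in a bounded neighborhood of $\exp(\mfa_\theta)$. For $\theta\neq\Delta$ this group has non-compact semisimple factors commuting with $\exp(\mfa_\theta)$. Their contribution to $\kappa_\theta$ is invisible to $\lambda_\theta$ and to the fixed points. The Remark after Lemma~\ref{lem:length versus magnitude 2} is a counterexample to exactly your argument. In $\SL(4,\Rb)$ with $\theta=\{\alpha_1,\alpha_3\}$, the $\gamma_n$ have constant, transverse $\gamma_n^\pm$, and $\alpha_1(\kappa(\gamma_n))$, $\alpha_3(\kappa(\gamma_n))$, $\alpha_1(\lambda(\gamma_n))$ all tend to infinity. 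Yet $U_\theta(\gamma_n)\to([e_2],{\rm span}\{e_1,e_2,e_3\})\neq\gamma_n^+$. Moreover $\log\sigma_1(\gamma_n)\approx\log n$, while $\log\lambda_1(\gamma_n)=\frac12\log n$ and $\omega_1(G_\theta(\gamma_n^-,\gamma_n^+))=0$. So no root-gap estimate gives this direction. It is exactly what the $\liminf$ bound in your transverse case needs: every loxodromic $\gamma$ with $(\gamma^-,\gamma^+)$ in a slightly smaller set and $\phi(\lambda_\theta(\gamma))\le R-\phi(G_\theta(b,a))-\epsilon$ must be counted in $\nu_R(B\times A)$. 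Your diagonal case borrows both bounds from the transverse case, so as written you only get the upper-bound half of the theorem.

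What is missing is the transversality of $\Lambda_\theta(\Gamma)$, used through north--south dynamics as in Lemma~\ref{lem:length versus magnitude 2}. Take an escaping sequence of loxodromic $\gamma_n\in\Gamma$ with $(\gamma_n^-,\gamma_n^+)\in K$. Pass to limits $\gamma_n^\pm\to a^\pm$ and $U_\theta(\gamma_n^{\pm1})\to b^\pm$; all four lie in $\Lambda_\theta(\Gamma)$. If $a^+\neq b^-$, then $a^+$ is transverse to $b^-$, so Proposition~\ref{prop:characterizing convergence in general} gives $\gamma_n^+=\gamma_n\gamma_n^+\to b^+$. Hence $a^+\in\{b^+,b^-\}$, and likewise $a^-\in\{b^+,b^-\}$. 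Since $a^+\neq a^-$, the pairs $(U_\theta(\gamma_n^{-1}),U_\theta(\gamma_n))$ stay in a compact subset of $\Fc_\theta^{(2)}$. Only then does your forward comparison apply (that half is fine; it is Lemma~\ref{prop:length versus magnitude}). It forces $a^\pm=b^\pm$ and, by continuity of $G_\theta$, the length estimate. With this repair your outline goes through. Your diagonal argument is then a genuinely different and valid route from the paper's. The paper only shows that $\nu_R$ gives small mass near the non-transverse locus, using the coarse bound $\phi(\kappa(h\gamma g^{-1}))\le\phi(\kappa(\gamma))+C_2$, two loxodromics without a common fixed point, non-atomicity of the Patterson--Sullivan measures, and a bounded-multiplicity cover. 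Your exact expansion $\kappa_\theta(h\gamma)-\kappa_\theta(\gamma)\to B_\theta(h,U_\theta(\gamma))$, combined with Patterson--Sullivan quasi-invariance, computes the limit near $(a,a)$ directly and avoids the non-atomicity input. Check the sign there, though: in the variable $y=hx$ the density is $e^{-\delta\phi(B_\theta(h^{-1},y))}\,d\mu(y)=d(h_*\mu)(y)$, which pulls back to $d\mu$ under $y=hx$.
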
 

Notice that the ``in particular'' part follows from the main assertion by taking $f \equiv 1$. 

 When $\Gamma\subset \mathsf G$ is a $\Psf_\theta$-relatively Anosov subgroup, $\phi\in \mathfrak a_\theta^*$, and $\delta^\phi_\Gamma < +\infty$, then $(\Gamma,\phi)$ is of divergence type by~\cite{CZZ4}. The $\phi$-BMS measure $m$ for $\Gamma$ is finite by either \cite[Thm. 1.1]{KO} or \cite[Thm.\ 8.1, Prop.\ 10.3]{BCZZ2} (and~\cite[Cor.\ 7.2]{CZZ4}). Hence Theorem~\ref{new equidistribution gen ver} does indeed generalize Theorems~\ref{thm:functional counting rel Anosov} and  \ref{new equidistribution} from the introduction.

The proof  has three main steps. 
First, we prove that one can estimate the difference between the $\theta$-Jordan projection and the $\theta$-Cartan projection of the elements using the $\theta$-Gromov product, see Lemmas \ref{prop:length versus magnitude} and \ref{lem:length versus magnitude 2}. 
Using this and Theorem~\ref{old equidistribution}, we then prove a weaker version of Theorem \ref{new equidistribution}, where we further assume that the support of $f$ is a compact subset of $\Fc_\theta^{(2)}$, see Proposition \ref{off the diagonal}. 
Finally, we prove that the $\nu_R$-measure  can be made arbitrarily small on small open neighborhoods of $\Fc_\theta^2\smallsetminus\Fc_\theta^{(2)}$, see Proposition \ref{on the diagonal}. Together, these three lemmas imply Theorem \ref{new equidistribution}, see Section \ref{sec:finishing proof}.

\subsection{Cartan projections, Jordan projections, and the Gromov product}

The next two lemmas control the difference between the $\theta$-Jordan projection and the $\theta$-Cartan projection using the $\theta$-Gromov product.

\begin{lemma}\label{prop:length versus magnitude} 
For any $\epsilon > 0$ and compact subset $K \subset \Fc_\theta^{(2)}$ there exists $R > 0$ such that: if $g \in \Gsf$, 
$(U_\theta(g^{-1}), U_\theta(g)) \in K$, and $\min_{\alpha \in \theta} \alpha(\kappa(g)) \geq R$, then
\begin{enumerate}
\item $g$ is $\theta$-loxodromic, and $\dist_{\Fc_\theta}(U_\theta(g^{\pm 1}), g^\pm) < \epsilon$,
\item $\left|\omega_\alpha\Big(\kappa_\theta(g) - \lambda_\theta(g) - G_\theta( U_\theta(g^{-1}),U_\theta(g)\Big)\right| < \epsilon$ for all $\alpha\in\theta$.
\end{enumerate} 
\end{lemma}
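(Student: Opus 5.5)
The plan is to argue by contradiction via sequences, and to reduce everything to linear algebra in the Tits representations of Proposition~\ref{prop:reduction to the linear case}. Suppose the lemma fails. Then there are $\epsilon>0$, a compact $K\subset\Fc_\theta^{(2)}$, and a sequence $g_n$ with $(U_\theta(g_n^{-1}),U_\theta(g_n))\in K$ and $\min_{\alpha\in\theta}\alpha(\kappa(g_n))\to\infty$, for which (1) or (2) fails for every $n$. Passing to a subsequence, we may assume $U_\theta(g_n)\to x^+$ and $U_\theta(g_n^{-1})\to x^-$ with $(x^-,x^+)\in K$, so $x^\pm$ are transverse. Because $\theta$ is symmetric, $U_{\iota^*(\theta)}(g_n^{-1})=U_\theta(g_n^{-1})\to x^-$. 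Proposition~\ref{prop:characterizing convergence in general} therefore says that $g_n$ converges to the constant map $x^+$ uniformly on compact subsets of $\Fc_\theta\smallsetminus\mathcal Z_{x^-}$, and that $g_n^{-1}$ does the same with limit $x^-$ away from $\mathcal Z_{x^+}$.

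For (1), choose a small closed neighborhood $B$ of $x^+$ that lies in $\Fc_\theta\smallsetminus\mathcal Z_{x^-}$; transversality of $x^\pm$ makes this possible. For large $n$ we have $g_nB\subset \mathrm{int}\,B$, and in fact $g_n$ contracts $B$. I would make the contraction quantitative in the linear setting. For each $\alpha\in\theta$, $\Phi_\alpha(g_n)$ acts on projective space with $\sigma_2/\sigma_1=e^{-\alpha(\kappa(g_n))}\to0$ (Proposition~\ref{prop:reduction to the linear case}(4)), so it contracts a neighborhood of $\xi_\alpha(x^+)^1$ that is transverse to $\xi_\alpha(x^-)^{d-1}$. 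This gives an attracting fixed line, with top eigenvalue modulus $\lambda_1$ and $\lambda_1/\lambda_2\to\infty$. Then $\alpha(\lambda(g_n))>0$ for every $\alpha\in\theta$, so $g_n$ is $\theta$-loxodromic, and its attracting fixed point $g_n^+$ lies in $B$, which forces $g_n^+\to x^+$. The same argument applied to $g_n^{-1}$ gives $g_n^-\to x^-$. This contradicts the failure of (1).

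For (2), fix $\alpha\in\theta$ and work with $h_n=\Phi_\alpha(g_n)$, using Proposition~\ref{prop:reduction to the linear case}(2),(5) together with the definition of $G_\theta$. The quantity to estimate becomes
\[
\tfrac{1}{N_\alpha}\Big(\log\sigma_1(h_n)-\log\lambda_1(h_n)-\log\tfrac{\|f_n\|\|v_n\|}{|f_n(v_n)|}\Big),
\]
where $v_n$ spans $U_1(h_n)$ and $\ker f_n=U_{d-1}(h_n^{-1})$; note that $U_{d-1}(h_n^{-1})$ is the span of the top $d-1$ right singular directions of $h_n$. Write $h_n=k_n a_n \ell_n$ and take the rank-one approximation $h_n\approx \sigma_1 k_ne_1 (\ell_n^{-1}e_1)^*$. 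Then $|\operatorname{tr}(h_n)|\approx\sigma_1(h_n)\,|\langle \ell_n^{-1}e_1,k_ne_1\rangle|$, and the relative error is $O(\sigma_2/\sigma_1)$ divided by the angle term. Here $\langle \ell_n^{-1}e_1,k_ne_1\rangle$ is exactly $f_n(v_n)/(\|f_n\|\|v_n\|)$, since $\ker f_n=(\ell_n^{-1}e_1)^\perp$. The angle term is bounded away from $0$ by compactness of $K$ and continuity of $G_\theta$. I would then show that $\lambda_1(h_n)/|\operatorname{tr} h_n|\to1$, or more directly that $\lambda_1(h_n)/(\sigma_1(h_n)|\langle\ell_n^{-1}e_1,k_ne_1\rangle|)\to1$, by letting $h_n$ act on the eigenvector $w_n$ close to $v_n$ and using $h_nw_n=\lambda w_n$. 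Concretely, $h_n w_n=\sigma_1\langle \ell_n^{-1}e_1,w_n\rangle k_ne_1+O(\sigma_2)$. Taking norms gives $|\lambda_1|=\sigma_1|\langle\ell_n^{-1}e_1,k_ne_1\rangle|(1+o(1))$, because $w_n\to$ the same limit as $k_ne_1$ by part (1).

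Taking logarithms shows that the displayed quantity tends to $0$, contradicting the failure of (2). The main obstacle is making the rank-one approximation uniform: the error is $O(e^{-\alpha(\kappa(g_n))})$ divided by a transversality constant, and that constant must be controlled uniformly using compactness of $K$. This is handled automatically by the subsequence argument.
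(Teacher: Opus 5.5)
Your proposal is correct and follows essentially the same route as the paper. Both argue by contradiction along a sequence, work in the Tits representations, and use that $\Phi_\alpha(g_n)/\sigma_1(\Phi_\alpha(g_n))$ is asymptotically the rank-one map $f(\cdot)v$, where $f(v)\neq 0$ by transversality; this gives $\lambda_1\sim\sigma_1\abs{f(v)}$ and the convergence of the attracting and repelling data to $x^\pm$.

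You assert without proof that projective contraction forces $\lambda_1/\lambda_2\to\infty$. The paper gets proximality, eigenline convergence and the eigenvalue asymptotics at once from continuity of eigenvalues for $g_n/\norm{g_n}\to T$, and that is the cleanest way to justify your assertion.

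One slip: $U_{d-1}(h_n^{-1})=(\ell_n^{-1}e_1)^\perp$ is spanned by the \emph{bottom} $d-1$ right singular directions of $h_n$, not the top ones. You use the correct formula afterwards, so nothing downstream is affected.
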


\begin{proof}
We first prove the lemma in the special case where ${\ms G} = \SL(d,\Rb)$, and $\theta=\{\alpha_1,\alpha_{d-1}\}$. Then, we will deduce the full strength of the proposition from the special case by applying the representations given by Proposition \ref{prop:reduction to the linear case}.

Suppose that the lemma fails in the special case. Then there exist $\epsilon>0$, a compact subset $K \subset \Fc_\theta^{(2)}$ and a  sequence $(g_n)$ in $\Gsf$
such that for each $n\in\Nb$, 
\[
\min_{\alpha \in \theta} \alpha(\kappa(g_n)) \geq n \quad \text{and} \quad (U_\theta(g_n^{-1}),U_\theta(g_n)) \in K,
\]
 and at least one of the conclusions of the lemma fails. Passing to a subsequence, we may assume that 
\[
U_{\theta}(g_n)\to x_+\quad\text{and}\quad U_\theta(g_n^{-1})\to x_-
\]
for some transverse $x_+,x_-\in\Fc_\theta$.  
 
For any $g\in\End(\Rb^d)$, let $\norm{g}$ denote the largest singular value of $g$. Passing to a further subsequence we can suppose that $ \frac{g_n}{\|g_n\|} \rightarrow T$ in $\End(\Rb^d)$. Since $\alpha_1(\kappa(g_n))\to\infty$, 
$$
{\rm im} \, T = x_+^1 \quad \text{and} \quad \ker T = x_-^{d-1}. 
$$
Fix unit vectors $v \in \Rb^d$ and $f \in \Rb^{d*}$ with $v \in x_+^1$ and $\ker f = x_-^{d-1}$. Since $\norm{T} =1$, after possibly replacing $v$ with $-v$ we have 
$
T ( \cdot )=  f(\cdot ) v.
$
Since $x_+$ and $ x_-$ are transverse, $f(v) \neq 0$. Then 
$$
\lambda_1(T) =\norm{T(v)} = \abs{f(v)} \quad \text{and} \quad \lambda_2(T) = \cdots = \lambda_d(T) = 0.
$$
Then, since eigenvalues are continuous functions on the space of matrices, 
$g_n = \norm{g_n} \frac{g_n}{\|g_n\|}$ is proximal when $n$ is large. Further, the attracting eigenlines $((g_n^+)^1)$ and repelling hyperplanes $((g_n^-)^{d-1})$ satisfy 
$$
\lim_{n \rightarrow \infty} \dist_{\Fc_{\alpha_1}}( (g_n^+)^1, x_+^1) = 0 \quad \text{and} \quad \lim_{n \rightarrow \infty} \dist_{\Fc_{\alpha_{d-1}}}( (g_n^-)^{d-1}, x_-^{d-1}) = 0.
$$
Thus
$$
\lim_{n \rightarrow \infty} \dist_{\Fc_{\alpha_1}}( (g_n^+)^1, U_\theta(g_n)^1) = 0 \quad \text{and} \quad \lim_{n \rightarrow \infty} \dist_{\Fc_{\alpha_{d-1}}}( (g_n^-)^{d-1}, U_\theta(g_n^{-1})^{d-1}) = 0.
$$
Next fix unit vectors $(v_n) \subset \Rb^d$ converging to $v$ and  unit vectors $(f_n) \subset \Rb^{d*}$ converging to $f$ such that $v_n \in U_\theta(g_n)^1$ and $\ker f_n = U_\theta(g_n^{-1})^{d-1}$ for all $n$. Then 

\begin{align*}
\lim_{n \rightarrow \infty} & \abs{\omega_1\big(\kappa_\theta(g_n) - \lambda_\theta(g_n) - G_\theta( U_\theta(g_n^{-1}),U_\theta(g_n)\big)}  \\
& = \lim_{n \rightarrow \infty}\abs{ \log\norm{g_n} - \log \lambda_1(g_n)  - \log \frac{ \norm{f_n}^*\norm{v_n} }{\abs{f_n(v_n)}} } \\
& = \lim_{n \rightarrow \infty} \abs{  - \log \lambda_1\left( \frac{g_n}{\norm{g_n}}\right)  + \log \abs{f_n(v_n)} }  \\
& =  \Big| - \log \lambda_1(T) + \log \abs{ f(v)}\Big|= 0. 
\end{align*}

Now if we repeat the above argument with $g_n^{-1}$, we obtain 
\begin{align*}
\lim_{n \rightarrow \infty} & \dist_{\Fc_{\alpha_1}}( (g_n^-)^1, U_\theta(g_n^{-1})^1) = 0, \quad \lim_{n \rightarrow \infty} \dist_{\Fc_{\alpha_{d-1}}}( (g_n^+)^{d-1}, U_\theta(g_n)^{d-1}) = 0,
\end{align*}
and 
\begin{align*}
0 = & \lim_{n \rightarrow \infty}  \abs{\omega_{1}\big(\kappa_\theta(g_n^{-1}) - \lambda_\theta(g_n^{-1}) - G_\theta( U_\theta(g_n),U_\theta(g_n^{-1})\big)} \\ 
 = &  \lim_{n \rightarrow \infty}  \abs{\omega_{d-1}\big(\kappa_\theta(g_n) - \lambda_\theta(g_n) - G_\theta( U_\theta(g_n^{-1}),U_\theta(g_n)\big)}.
\end{align*}
Thus the conclusion of the lemma holds for $n$ large and we have a contradiction. 

Next, we observe that the intermediate case of the lemma when $\Gsf$ is general and $\theta=\{\alpha,\iota^*(\alpha)\}$ 
for some $\alpha\in\Delta$ follows from the special case and Proposition~\ref{prop:reduction to the linear case}. 
Let $R(K,\epsilon)$ be the constant obtained for the compact set $K$ and $\epsilon>0$ in
our special case.
Let $\Phi_\alpha \colon \mathsf G\to\SL(d_\alpha,\Rb)$ and $\xi_\alpha \colon \Fc_\theta\to\Fc_{1,d_\alpha-1}(\Rb^d)$  be given by Proposition~\ref{prop:reduction to the linear case}, 
and choose $C_\alpha>1$ such that $\xi_\alpha$ is $C_\alpha$-biLipschitz. 
Then $R_\alpha(K,\epsilon)$ can be taken to be $R((\xi_\alpha\times \xi_\alpha)(K),\frac{\epsilon}{C_\alpha}))$.

Finally, note that as a consequence of the intermediate case, the most general case of the proposition holds for $R(K,\epsilon):=\max_{\alpha \in \theta}R_\alpha(K,\epsilon)$.
\end{proof}

\begin{lemma}\label{lem:length versus magnitude 2} 
Suppose that $\Gamma \subset \Gsf$ is $\Psf_\theta$-transverse. For any $\epsilon > 0$ and compact subset $K \subset \Fc_\theta^{(2)}$ there exists a finite set $F\subset \Gamma$ such that: if $\gamma \in \Gamma \smallsetminus F$ is $\theta$-loxodromic and 
$(\gamma^{-}, \gamma^+) \in K$, then  
\begin{enumerate}
\item $\dist_{\Fc_\theta}(U_\theta(\gamma^{\pm 1}), \gamma^\pm) < \epsilon$,
\item $\abs{\omega_\alpha\big(\kappa_\theta(\gamma) - \lambda_\theta(\gamma) - G_\theta( \gamma^-,\gamma^+\big)} < \epsilon$ for all $\alpha\in\theta$. 
\end{enumerate} 
\end{lemma}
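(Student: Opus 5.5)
The plan is to reduce to Lemma~\ref{prop:length versus magnitude}. For that we must show that when $\gamma\in\Gamma$ is $\theta$-loxodromic with $(\gamma^-,\gamma^+)\in K$ and $\gamma$ lies outside a large finite set, the pair $(U_\theta(\gamma^{-1}),U_\theta(\gamma))$ lies in a fixed compact subset $K'\subset\Fc_\theta^{(2)}$ and is close to $(\gamma^-,\gamma^+)$. We argue by contradiction with sequences. Suppose $(\gamma_n)$ is a sequence of distinct $\theta$-loxodromic elements with $(\gamma_n^-,\gamma_n^+)\in K$ for which conclusion (1) or (2) fails. By $\Psf_\theta$-divergence, $\min_{\alpha\in\theta}\alpha(\kappa(\gamma_n))\to\infty$. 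Passing to a subsequence, we may assume
\[
\gamma_n^\pm\to y^\pm, \qquad U_\theta(\gamma_n)\to x^+, \qquad U_\theta(\gamma_n^{-1})\to x^-,
\]
where $(y^-,y^+)\in K$, so $y^-$ and $y^+$ are transverse. Here $U_\theta(\gamma_n^{-1})$ is viewed in $\Fc_{\iota^*(\theta)}=\Fc_\theta$, since $\theta$ is symmetric.

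The key step is to show $x^\pm=y^\pm$. Applying Proposition~\ref{prop:characterizing convergence in general} to $(\gamma_n)$, the maps $\gamma_n$ converge uniformly on compact subsets of $\Fc_\theta\smallsetminus\mathcal Z_{x^-}$ to the constant map $x^+$. Suppose first that $y^+\notin \mathcal Z_{x^-}$. Then $\gamma_n^+\in\Fc_\theta\smallsetminus \mathcal Z_{x^-}$ for large $n$, and these points stay in a compact subset of it. Since $\gamma_n^+$ is fixed by $\gamma_n$, we get $\gamma_n^+=\gamma_n\gamma_n^+\to x^+$, hence $y^+=x^+$. Similarly, applying Proposition~\ref{prop:characterizing convergence in general}(3), if $y^-\notin\mathcal Z_{x^+}$ then $y^-=x^-$.

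It remains to rule out the degenerate configurations, and this is the main obstacle. Genuinely, $\gamma_n^+$ might converge into $\mathcal Z_{x^-}$; this step uses transversality of the limit set. The fixed points $\gamma_n^\pm$ lie in $\Lambda_\theta(\Gamma)$, since they are limits of $U_\theta(\gamma_n^{\pm k})$ as $k\to\infty$ by the first case applied to powers. The points $x^\pm$ also lie in $\Lambda_\theta(\Gamma)$ by definition. Transversality of $\Lambda_\theta(\Gamma)$ says that two limit points are non-transverse only if they are equal. So if $y^+\in\mathcal Z_{x^-}$, then $y^+=x^-$.

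To exclude this, pick a point $z\in\Lambda_\theta(\Gamma)$ different from $x^-$, $y^-$, and $y^+$ (assume $\Gamma$ is non-elementary; the elementary case is handled by finiteness). Since $z\notin\mathcal Z_{x^-}$, we have $\gamma_n z\to x^+$. Now consider the action of $\gamma_n$ near $y^+$ using the convergence-group dynamics: attracting point $x^+$ and repelling point $x^-$. The loxodromic $\gamma_n$ fixes $\gamma_n^+\to y^+=x^-$ and $\gamma_n^-\to y^-$. In the convergence group $\Gamma\curvearrowright\Lambda_\theta(\Gamma)$, a standard argument gives that the attracting and repelling fixed points of $\gamma_n$ converge, respectively, to the attracting point $x^+$ and repelling point $x^-$ of the sequence. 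This would force $y^+=x^+$ and $y^-=x^-$.

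I would prove that standard fact directly. Take a small neighborhood $V$ of $x^-$. For large $n$, $\gamma_n$ maps the complement of $V$ into a small neighborhood $W$ of $x^+$, and $\gamma_n^{-1}$ maps the complement of $W$ into $V$. The set $\{\gamma_n^+,\gamma_n^-\}$ must then meet both $W$ and $V$, because a loxodromic element's attracting fixed point is attracting for large powers. Given $y^-\ne y^+$, this forces $y^\pm=x^\pm$.

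Once $x^\pm=y^\pm$, the pair $(U_\theta(\gamma_n^{-1}),U_\theta(\gamma_n))$ converges to $(y^-,y^+)$. For large $n$ it therefore lies in the compact set $K'$, a closed $\eta$-neighborhood of $K$ inside $\Fc_\theta^{(2)}$. Conclusion (1) follows because $\dist_{\Fc_\theta}(U_\theta(\gamma_n^{\pm1}),\gamma_n^\pm)\to0$. For (2), Lemma~\ref{prop:length versus magnitude} applied with $K'$ gives
\[
\omega_\alpha\big(\kappa_\theta(\gamma_n)-\lambda_\theta(\gamma_n)-G_\theta(U_\theta(\gamma_n^{-1}),U_\theta(\gamma_n))\big)\to0 .
\]
Finally, uniform continuity of $G_\theta$ on $K'$ lets us replace $(U_\theta(\gamma_n^{-1}),U_\theta(\gamma_n))$ by $(\gamma_n^-,\gamma_n^+)$ in this expression, which contradicts the assumed failure.
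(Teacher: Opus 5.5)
Most of your proposal matches the paper: the contradiction sequence, the fixed-point argument giving $\{y^+,y^-\}\subset\{x^+,x^-\}$ (hence $\{y^+,y^-\}=\{x^+,x^-\}$ and $x^+\neq x^-$), and the final appeal to Lemma~\ref{prop:length versus magnitude} on a compact neighborhood $K'$ followed by continuity of $G_\theta$. The gap is the step you yourself call the main obstacle: excluding the swapped configuration $y^+=x^-$, $y^-=x^+$.

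\begin{itemize}
\item Your ping-pong paragraph concludes only that $\{\gamma_n^+,\gamma_n^-\}$ meets both $W$ and $V$. That is the unordered statement you already had, and it is fully compatible with the swap. So ``given $y^-\neq y^+$, this forces $y^\pm=x^\pm$'' does not follow. The auxiliary point $z$ is never used.
\item In $\Fc_\theta$, the uniform convergence of Proposition~\ref{prop:characterizing convergence in general} holds only on compact subsets of $\Fc_\theta\smallsetminus\mathcal Z_{x^-}$. Hence ``$\gamma_n$ maps the complement of a neighborhood of $x^-$ into $W$'' is false unless you restrict to $\Lambda_\theta(\Gamma)$.
\item Inside $\Lambda_\theta(\Gamma)$, you would need a point of $W\cap\Lambda_\theta(\Gamma)$ other than $\gamma_n^-$, i.e.\ non-elementarity. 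Your claim that the elementary case is ``handled by finiteness'' is wrong: the cyclic group generated by a $\theta$-loxodromic element has infinitely many $\theta$-loxodromic elements.
\end{itemize}

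There are two ways to close the gap.

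\emph{Ordered dynamics.} Since $x^+\neq x^-$ are limit points, they are transverse. So you can choose an open neighborhood $V$ of the closed set $\mathcal Z_{x^-}$ and an open neighborhood $W$ of $x^+$ with $\overline W\cap\overline V=\varnothing$. For large $n$, Proposition~\ref{prop:characterizing convergence in general} gives $\gamma_n(\Fc_\theta\smallsetminus V)\subset W$, hence $\gamma_n(\overline W)\subset W$. Take $w\in W\smallsetminus\mathcal Z_{\gamma_n^-}$, which exists because $\mathcal Z_{\gamma_n^-}$ is nowhere dense. Then $\gamma_n^k w$ stays in $W$ and converges to $\gamma_n^+$, so $\gamma_n^+\in\overline W$ and $y^+=x^+$. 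The argument for $y^-$ is symmetric, using $\gamma_n^{-1}$.

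\emph{The paper's route, which is simpler.} Once $\{y^+,y^-\}=\{x^+,x^-\}$ with $x^+\neq x^-$, the pair $(U_\theta(\gamma_n^{-1}),U_\theta(\gamma_n))$ eventually lies in a compact subset of $\Fc_\theta^{(2)}$. Part (1) of Lemma~\ref{prop:length versus magnitude}, which you already invoke for part (2), then gives $\dist_{\Fc_\theta}(U_\theta(\gamma_n^{\pm1}),\gamma_n^\pm)\to0$ directly. Thus the swap is excluded by the linear-algebra input (proximality of $g_n/\norm{g_n}$ in the Tits representations), not by convergence-group dynamics.
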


\begin{proof}  Suppose not. Then there exists an escaping sequence $(\gamma_n)$ of $\theta$-loxodromic elements in $\Gamma$
where $(\gamma_n^{-}, \gamma_n^+) \in K$ 
and at least one of the two conclusions of the lemma fails for each $\gamma_n$. 

Passing to a subsequence, we can suppose that $\gamma_n^\pm \rightarrow a^\pm$ and $U_\theta(\gamma_n^{\pm 1}) \rightarrow b^\pm$. 
We claim that either $a^+=b^+$ or $a^+=b^-$. Indeed, since $a^+,b^-\in\Lambda_\theta(\Gamma)$, if $a^+\neq b^-$, then $a^+$ is transverse to $b^-$, so there is a compact subset $K' \subset \Fc_\theta \smallsetminus \mathcal Z_{b^-}$ such that $\gamma_n^+\in K'$ for all sufficiently large $n$. 
Then Proposition~\ref{prop:characterizing convergence in general} implies that $\gamma_n^+=\gamma_n(\gamma_n^+) \rightarrow b^+$, 
so $a^+=b^+$. Similarly, $a^-=b^-$ or $a^-=b^+$. Since $(\gamma_n^-,\gamma_n^+)\in K$ for all $n$, 
we deduce that $a^-\neq a^+$, and so $b^-\neq b^+$ and $\{a^+,a^-\}=\{b^+,b^-\}$. 
This implies that there is a compact set in $\Fc_\theta^{(2)}$ that contains $(U_\theta(\gamma_n),U_\theta(\gamma_n^{-1}))$ for all large $n$. Thus, by Lemma \ref{prop:length versus magnitude}, for large $n$, the first conclusion holds for $\gamma_n$, so the second conclusion necessarily fails.

Next fix a compact subset $K_0' \subset \Fc_\theta^{(2)}$ with $K \subset {\rm int}(K_0')$, and let 
$$
K' : = \{ (x,y) : (x,y) \in K_0' \text{ or } (y,x) \in K_0'\}.
$$
Also, fix $\alpha\in\theta$. Since $(a^-,a^+) \in K$, for $n$ large we have $(U_\theta(\gamma_n^{-1}), U_\theta(\gamma_n)) \in K'$. Hence, by Lemma~\ref{prop:length versus magnitude}, when $n$ is large we have that 
\[\abs{\omega_\alpha\big(\kappa_\theta(\gamma_n) - \lambda_\theta(\gamma_n) - G_\theta( U_\theta(\gamma_n^{-1}),U_\theta(\gamma_n)\big)} < \frac{\epsilon}{2}.\]
Since $G_\theta$ is continuous, when $n$ is large we also have that 
\[\abs{\omega_\alpha(G_\theta( U_\theta(\gamma_n^{-1}),U_\theta(\gamma_n)\big)-G_\theta( \gamma_n^-,\gamma_n^+)\big)}<\frac{\epsilon}{2}\]
Together, these imply that the second conclusion holds for large $n$, which is a contradiction.
\end{proof} 

\begin{remark} Unlike the case of Lemma \ref{prop:length versus magnitude}, the conclusions of Lemma \ref{lem:length versus magnitude 2} do not hold if we allow $\gamma$ to vary in $\mathsf G$ instead of a $\Psf_\theta$-transverse subgroup $\Gamma\subset\mathsf G$. For instance, if $\Gsf = \SL(4,\Rb)$, $\theta = \{ \alpha_1,\alpha_{3}\}$, and  
$$
\gamma_n: = \begin{pmatrix} n^{1/2} & 0 & 0 & 0 \\ 0 & 1 & n & 0 \\ 0 & 0 & 1 & 0 \\ 0 & 0 & 0 & n^{-1/2} \end{pmatrix},
$$
then $\gamma_n^+ = ( [e_1], {\rm span}\{e_1,e_2,e_3\})$ and $\gamma_n^- = ( [e_4], {\rm span}\{e_2,e_3,e_4\})$ for all $n$, so $(\gamma_n^+,\gamma_n^-)$ are uniformly transverse. However, $U_\theta(\gamma_n) \rightarrow ([e_2], {\rm span}\{e_1,e_2,e_3\})$. 
\end{remark}

\subsection{The equidistribution step}\label{sec:equidistribution step}
Fix a $\Psf_\theta$-transverse subgroup $\Gamma\subset\mathsf G$ and $\phi\in\mathfrak a_\theta^*$ is such that 
$(\Gamma,\phi)$ is of divergence type and the total mass $\norm{m}$ of the $\phi$-BMS measure $m$ for $\Gamma$ is finite.

Let $\delta:=\delta^\phi_\Gamma$. Then let $\mu$ and $\bar\mu$ denote the $\phi$- and $\bar\phi$-Patterson--Sullivan measures for $\Gamma$ of dimension $\delta$.

For any $R>0$, let $\nu_R$ be the Borel measure on $\Fc_\theta^2$ given by
$$
\nu_R := \delta e^{-\delta R}  \sum_{\substack{\gamma \in \Gamma \\ \phi(\kappa_\theta(\gamma)) \leq R}}\mc D_{U_\theta(\gamma^{-1})}\otimes \mc D_{U_\theta(\gamma)},$$ 
where for any $F\in\Fc_\theta$, we write $\mathcal D_F$ to denote the Dirac measure on $\Fc_\theta$ supported at the point $F$. 

We will make use of Theorem \ref{old equidistribution} to establish the following.

\begin{proposition}
\label{off the diagonal}
If $f \colon \Fc_\theta^2 \rightarrow \Rb$ is continuous and compactly supported in $\Fc_\theta^{(2)}$, then 
\[
 \lim_{R \to \infty}\int f\, d\nu_R= \frac{1}{\norm{m}}	 \int f \,d(\bar\mu \otimes \mu).
 \]
 \end{proposition}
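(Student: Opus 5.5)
We follow Sambarino's strategy: use Theorem~\ref{old equidistribution} to count loxodromic elements by $\phi$-length with prescribed fixed points, then convert lengths to $\phi$-distances using Lemmas~\ref{prop:length versus magnitude} and~\ref{lem:length versus magnitude 2}. Since both sides are linear and positive in $f$, it suffices to bound $\limsup$ and $\liminf$ of $\int f\,d\nu_R$ for $f\ge 0$ supported in a small compact set. Partition of unity lets us assume $\supp f\subset K$, where $K\subset\Fc_\theta^{(2)}$ is compact and so small that $\phi\circ G_\theta$ varies by less than $\epsilon$ on a slightly larger compact neighbourhood $K'$. Let $c$ be a value of $\phi\circ G_\theta$ on $K'$.

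\emph{Step 1: replace Cartan data by Jordan data.} For $\gamma$ outside a finite set with $(U_\theta(\gamma^{-1}),U_\theta(\gamma))\in K$, divergence gives $\min_{\alpha\in\theta}\alpha(\kappa(\gamma))$ large. Then Lemma~\ref{prop:length versus magnitude} shows that $\gamma$ is $\theta$-loxodromic, that $(\gamma^-,\gamma^+)$ is $\epsilon$-close to $(U_\theta(\gamma^{-1}),U_\theta(\gamma))$ and hence lies in $K'$, and that
\[\phi(\kappa_\theta(\gamma))=\phi(\lambda_\theta(\gamma))+\phi(G_\theta(\gamma^-,\gamma^+))+O(\epsilon).\]
Expanding $\phi$ in the basis $\omega_\alpha$ converts the $\omega_\alpha$-estimates into an $O(\epsilon)$ bound on $\phi$. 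Conversely, Lemma~\ref{lem:length versus magnitude 2} shows that loxodromic $\gamma$ with $(\gamma^-,\gamma^+)$ in a compact set have $U_\theta(\gamma^{\pm1})$ $\epsilon$-close to $\gamma^\pm$ and satisfy the same estimate, again up to finitely many exceptions. Uniform continuity of $f$ lets us replace $f(U_\theta(\gamma^{-1}),U_\theta(\gamma))$ by $f(\gamma^-,\gamma^+)$ at cost $o(1)\cdot\#$. Taking $f$ supported in a slightly shrunk $K_-\subset K$ and enlarging to $K_+\supset K$ where needed, we obtain
\[\int f\,d\nu_R\le \delta e^{-\delta R}\sum_{\substack{\gamma\in\Gamma_{\lox}\\ \phi(\lambda_\theta(\gamma))\le R-c+2\epsilon}} f_+(\gamma^-,\gamma^+)+o(1),\]
together with the analogous lower bound using $f_-$ and $R-c-2\epsilon$. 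Here $f_\pm$ are continuous functions on $\Fc_\theta^{(2)}$ with compact support satisfying $f_-\le f\le f_+$ and $\|f_\pm-f\|_\infty$ small; they are obtained by uniform continuity.

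\emph{Step 2: apply Theorem~\ref{old equidistribution}.} With $T=R-c\pm2\epsilon$, the theorem gives
\[\delta e^{-\delta R}\sum(\cdots)\to e^{-\delta(c\mp2\epsilon)}\frac{1}{\norm{m}}\int f_\pm\, e^{\delta\phi(G_\theta)}\,d\bar\mu\,d\mu.\]
On $K'$ we have $e^{\delta\phi(G_\theta)}=e^{\delta c}e^{O(\epsilon)}$, so this limit equals $e^{O(\epsilon)}\frac{1}{\norm m}\int f_\pm\,d(\bar\mu\otimes\mu)$. Letting $\epsilon\to0$ and $f_\pm\to f$ yields the claim.

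The main obstacle is the bookkeeping that makes the correspondence between the two sums hold at the level of measures: with this localized choice of $c$, the constant factor $e^{-\delta c}$ exactly cancels the density $e^{\delta\phi(G_\theta)}$. One must also make sure that the finitely many exceptional elements, and elements whose flags lie near $\partial K$, contribute $o(e^{\delta R})$. Exceptional elements are handled trivially since they are finite in number. Boundary effects are absorbed by the $f_\pm$ sandwich, because everything is stated with compact supports strictly inside $\Fc_\theta^{(2)}$.
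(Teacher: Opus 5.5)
Your proposal is correct and is essentially the paper's proof. A partition of unity localizes $f$ to pieces on which $\phi\circ G_\theta$ is within $\epsilon$ of a constant ($\phi(X_U)$ in the paper, your $c$). Lemmas~\ref{prop:length versus magnitude} and~\ref{lem:length versus magnitude 2}, applied outside a finite set $F$, sandwich the Cartan-sum between Jordan-sums with thresholds $R-c\pm2\epsilon$. Theorem~\ref{old equidistribution} then produces the factor $e^{-\delta c}$, which cancels the density $e^{\delta\phi(G_\theta)}$ up to $e^{O(\epsilon)}$.

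Your $f_\pm$ play the role of the paper's $f_U\pm\epsilon_1 g_U$, where $g_U\equiv1$ on the $\beta$-neighbourhood of $\supp f_U$. What is actually needed is that $f_+$ dominates, and $f_-$ is dominated by, $f$ on $\beta$-neighbourhoods, not merely $f_-\le f\le f_+$ pointwise; the paper's construction provides this, as does taking the max/min of $f$ over $\beta$-balls.
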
 
 
 \begin{proof} 
 We will first set up some notation that we will use in this proof. First, the metric $\dist_{\Fc_\theta}$ on $\Fc_\theta$ induces a product metric on $\mathcal F_\theta^{(2)}$, which we also denote by $\dist_{\Fc_\theta}$, i.e.
\[\dist_{\Fc_\theta}((x_1,x_2),(y_1,y_2))=\max\big\{\dist_{\Fc_\theta}(x_1,y_1),\dist_{\Fc_\theta}(x_2,y_2)\big\}\]
for all $(x_1,x_2),(y_1,y_2)\in\Fc_\theta^{(2)}$. Second, since $\{\omega_\alpha|_{\mathfrak a_\theta}:\alpha\in\theta\}$ is a basis for $\mfa_\theta^*$, for each $\alpha\in\theta$, there is a unique $c_\alpha\in\Rb$ such that $\phi=\sum_{\alpha\in\theta}c_\alpha\omega_\alpha$. We define 
\[\abs{\phi}:=\sum_{\alpha\in\theta}\abs{c_\alpha}.\]
 
 To prove the lemma, it suffices to consider the case where $f \geq 0$. 
 Fix $\epsilon > 0$. Since $f$ is compactly supported and the $\theta$-Gromov product $G_\theta$ is continuous on $\Fc_\theta^{(2)}$, there is a finite open cover $\mathcal U$ of the support of $f$ with the property that for each $U\in\mathcal U$, we have $\overline{U} \subset \Fc_\theta^{(2)}$ and there exists $X_U \in \mfa_\theta$ such that
 \begin{equation}
 \label{support assump}
\Big |\omega_\alpha\big(G_\theta(x,y) - X_U\big)\Big| < \epsilon
\end{equation}
 for all $(x,y) \in U$ and $\alpha\in\theta$. Then by using a partition of unity subordinate to $\mathcal U$, we can write $f=\sum_{U \in \mathcal U} f_U$, where each $f_U$ is a continuous non-negative function whose support lies in $U$.

Fix $\epsilon_1 \in (0,\epsilon)$. Since $\mathcal U$ is finite and each $f_U$ is compactly supported (and therefore uniformly continuous), there exists $\beta\in(0,\epsilon)$ such that for all $U\in \mathcal U$, if $d_{\Fc_\theta}\big((x_1,x_2),(y_1,y_2)\big)< \beta$, then 
 \begin{equation}
 \label{function assump} 
 \abs{ f_U( x_1,x_2)-f_U(y_1,y_2)} < \epsilon_1.
 \end{equation} 
 Further shrinking $\beta$, we can assume that 
 $$
\overline{B_\beta(\supp(f_U))} \subset U
$$
where $B_\beta(\supp(f_U))$ is the $\beta$-neighborhood of $\supp(f_U)$. Since $f$ is compactly supported,  by Lemmas \ref{prop:length versus magnitude} and \ref{lem:length versus magnitude 2}, there exists a finite set $F \subset \Gamma$ such that both of the following hold for all $\gamma \in \Gamma \smallsetminus F$:
\begin{itemize}
\item[(i)] if $(U_\theta(\gamma^{-1}),U_\theta(\gamma)) \in {\rm supp}(f)$, then $\gamma$ is $\theta$-loxodromic, 
and
$$\dist_{\Fc_\theta}\big((U_\theta(\gamma),U_\theta(\gamma^{-1})),(\gamma^+,\gamma^-)\big)< \beta\quad\text{and}\quad \abs{\phi(\kappa_\theta(\gamma) - \lambda_\theta(\gamma) - G_\theta( U_\theta(\gamma^{-1}),U_\theta(\gamma)))} < \beta\abs{\phi}.$$
\item[(ii)] if $\gamma$ is $\theta$-loxodromic and $(\gamma^-,\gamma^+)\in{\rm supp}(f)$, then 
$$\dist_{\Fc_\theta}\big((U_\theta(\gamma),U_\theta(\gamma^{-1})),(\gamma^+,\gamma^-)\big)< \beta\quad\text{and}\quad \abs{\phi(\kappa_\theta(\gamma) - \lambda_\theta(\gamma) - G_\theta( \gamma^-,\gamma^+))} < \beta\abs{\phi}.$$
\end{itemize}

Fix $U\in \Uc$ and let $C_U:=-\phi(X_U)+2\epsilon\abs{\phi}$.
For any $R>0$, notice that if $\gamma\in \Gamma\smallsetminus F$, $(U_\theta(\gamma^{-1}),U_\theta(\gamma)) \in {\rm supp}(f_U)$,
 and $ \phi(\kappa_\theta(\gamma)) \leq R$, then (i) and Inequality \eqref{support assump} imply that
\begin{equation}
\label{lambda bound}
\phi (\lambda_\theta(\gamma)) <  R-\phi(X_U) + (\epsilon+\beta) \abs{\phi}< R+C_U.
\end{equation}
Similarly, (ii) and Inequality \eqref{support assump} together  imply that if $\gamma\in \Gamma\smallsetminus F$ is $\theta$-loxodromic, $(\gamma^-,\gamma^+) \in {\rm supp}(f_U)$, 
 and $ \phi(\lambda_\theta(\gamma)) \leq R+\phi(X_U)-2\epsilon\abs{\phi}$, then
\begin{equation}
\label{kappa bound}
\phi (\kappa_\theta(\gamma)) <R. 
\end{equation}

Let $g_U\colon \mathcal F_\theta^{(2)}\to [0,1]$ be a  continuous  function where $\supp(g_U) \subset U$ and  $g_U|_{B_\beta(\supp(f_U))} \equiv 1$. We may then  apply Inequalities \eqref{function assump}, \eqref{lambda bound} and (i) to see that
\begin{equation}
\label{second term bound}
\sum_{\substack{\gamma \in \Gamma \smallsetminus F \\ \phi(\kappa_\theta(\gamma)) \leq R}} f_U ( U_\theta(\gamma^{-1}), U_\theta(\gamma) ) \leq 
\sum_{\substack{\gamma \in \Gamma_{\rm lox} \\ \phi(\lambda_\theta(\gamma)) \leq  R+C_U}} \left(f_U +\epsilon_1 g_U\right)( \gamma^-, \gamma^+ ).
\end{equation}
Similarly, \eqref{function assump}, \eqref{kappa bound} and (ii) imply that
\begin{equation}
\label{second term bound 2}
\sum_{\substack{\gamma \in \Gamma_{\rm lox}\smallsetminus F \\ \phi(\lambda_\theta(\gamma)) \leq  R-C_U}} (f_U-\epsilon_1g_U) ( \gamma^-, \gamma^+ ) \leq \sum_{\substack{\gamma \in \Gamma \\ \phi(\kappa_\theta(\gamma)) \leq R}}  f_U( U_\theta(\gamma^{-1}), U_\theta(\gamma) ) .
\end{equation}

Since $F$ is finite, 
$$
\limsup_{R\to\infty}\int f_U d\nu_R=\limsup_{R\to\infty}\, \delta e^{-\delta R}\sum_{\substack{\gamma \in \Gamma\smallsetminus F \\ \phi(\kappa_\theta(\gamma)) \leq R}} f_U( U_\theta(\gamma^{-1}), U_\theta(\gamma) ).
$$
Then by Inequality \eqref{second term bound},
\begin{align*}
\limsup_{R\to\infty} & \int f_U d\nu_R \le \limsup_{R\to\infty}\, \delta e^{-\delta R}\sum_{\substack{\gamma \in \Gamma_{\rm lox} \\ \phi(\lambda_\theta(\gamma)) \leq  R+C_U}} \left(f_U +\epsilon_1 g_U\right)( \gamma^-, \gamma^+ ) \\
& = e^{\delta C_U}\limsup_{R\to\infty}\, \delta e^{-\delta R}\sum_{\substack{\gamma \in \Gamma_{\rm lox} \\ \phi(\lambda_\theta(\gamma)) \leq  R}} \left(f_U +\epsilon_1 g_U\right)( \gamma^-, \gamma^+ ).
\end{align*}
So by Theorem \ref{old equidistribution},
\begin{align*}
\limsup_{R\to\infty} & \int f_U d\nu_R \le \frac{e^{\delta C_U}}{\norm{m}} \int e^{\delta\phi(G_\theta(x,y))}\left( f_U+\epsilon_1 g_U \right)(x,y)\,d\bar\mu(x)d\mu(y).
\end{align*}
Then using Inequality \eqref{support assump}, the fact that $C_U=-\phi(X_U)+2\epsilon\abs{\phi}$, and the fact that $g_U$ takes values in $[0,1]$ and its support lies in $U$, 
\begin{align}\label{main estimate}
\limsup_{R\to\infty} & \int f_U d\nu_R \le \frac{e^{3\delta\epsilon\abs{\phi}}}{\norm{m}}\int \left( f_U+\epsilon_1 g_U \right)(x,y)\,d\bar\mu(x)d\mu(y)\\
&\le \frac{e^{3\delta\epsilon\abs{\phi}}}{\norm{m}}\left(\int f_U (x,y)\,d\bar\mu(x)d\mu(y)+\epsilon_1\bar\mu\otimes\mu(U)\right). \nonumber
\end{align}

A similar argument, using Inequality \eqref{second term bound 2} in place of \eqref{second term bound}, gives
\begin{align}\label{main estimate 2}
\begin{split}
\liminf_{R\to\infty}\int f_U d\nu_{R}\ge \frac{e^{-3\delta\epsilon\abs{\phi}}}{\norm{m}}\left(\int f_U(x,y)\,d\bar\mu(x)d\mu(y)-\epsilon_1\bar\mu\otimes\mu(U)\right).
\end{split}
\end{align}

Now taking the limit $\epsilon_1\to 0$ in Inequalities \eqref{main estimate} and \eqref{main estimate 2}, summing up over $U\in \mathcal U$, and then taking the limit as $\epsilon \to 0$ provides the estimate 
\[\limsup_{R\to\infty}\int fd\nu_R\le \frac{1}{\norm{m}}\int f d(\bar\mu\otimes \mu)\quad\text{and}\quad\liminf_{R\to\infty}\int fd\nu_R\ge \frac{1}{\norm{m}}\int f d(\bar\mu\otimes \mu),\]
thus proving the lemma.
\end{proof}

 \subsection{Controlling the diagonal} We continue to assume the assumptions and use the notation introduced at the start of the Section~\ref{sec:equidistribution step}. 
 
 In this subsection, we control the $\nu_R$-measure of the non-transverse locus $D:=\Fc_\theta^2 \smallsetminus \Fc_\theta^{(2)}$ in the following sense. When $\theta = \iota^*(\theta)$, we may think of $D$ as a thickened diagonal.

 \begin{proposition}
 \label{on the diagonal}
For any $\epsilon > 0$ there exists an open neighborhood $\Oc\subset\Fc_\theta^2$ of $D$ such that 
\[
 \limsup_{R \to \infty} \nu_R(\Oc) \leq \epsilon. 
 \]
 \end{proposition}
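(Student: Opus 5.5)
The approach follows Roblin/Sambarino: use the $\Gamma$-equivariance of the Patterson--Sullivan measures together with a finite-mass argument to show that pairs $(U_\theta(\gamma^{-1}),U_\theta(\gamma))$ near $D$ carry little $\nu_R$-mass. The idea is to write a group element whose endpoints are nearly non-transverse as a product of two long pieces, so that the count factors through shadow estimates.

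\emph{Step 1: choose $\Oc$.} Since $\bar\mu\otimes\mu$ has no atoms on the transverse limit set, we have $\bar\mu\otimes\mu(D)=0$: $\Lambda_\theta(\Gamma)$ is transverse, so points of $D\cap\Lambda_\theta(\Gamma)^2$ lie on the diagonal, and $\mu$ is non-atomic. Hence, given $\epsilon$, we can cover $D\cap\Lambda_\theta(\Gamma)^2$ by finitely many products $B(x_i,r)\times B(x_i,r)$, with $x_i\in\Lambda_\theta(\Gamma)$, of small total $\bar\mu\otimes\mu$-mass. We then enlarge this to an open $\Oc\supset D$. Points of $D$ far from $\Lambda_\theta(\Gamma)^2$ are irrelevant, since for all but finitely many $\gamma$ the pair $(U_\theta(\gamma^{-1}),U_\theta(\gamma))$ is close to $\Lambda_\theta(\Gamma)^2$, and finitely many $\gamma$ contribute $O(e^{-\delta R})$.

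\emph{Step 2: bound the count in a small product of balls.} We estimate
\[
\#\{\gamma:\phi(\kappa_\theta(\gamma))\le R,\ U_\theta(\gamma^{-1})\in B(x,r),\ U_\theta(\gamma)\in B(x,r)\}.
\]
Using the convergence group action and transversality, we fix a finite set $S\subset\Gamma$ such that for every $x$ some $s\in S$ moves $B(x,r)$ uniformly transverse to a fixed compact region. For $\gamma$ in the set above, consider $\gamma'=s\gamma$, or more precisely $s\gamma s'^{-1}$ with suitable $s,s'\in S$. Corollary~\ref{cor:equivariance of U_theta} gives $U_\theta(s\gamma s'^{-1})\approx sU_\theta(\gamma)$ and $U_\theta(s'\gamma^{-1}s^{-1})\approx s'U_\theta(\gamma^{-1})$. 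Corollary~\ref{subadditive} shows that $\phi\circ\kappa_\theta$ changes by a bounded amount $C_S$. Choosing $s$ so that $sB(x,r)$ and $s'B(x,r)$ lie in a compact subset of $\Fc_\theta^{(2)}$ (the two balls are the same, so we use two different elements $s\neq s'$ to separate them), the modified pairs lie in a fixed compact subset $K\subset\Fc_\theta^{(2)}$. Proposition~\ref{off the diagonal} then applies to a bump function $h$ on the product of the images $s'B(x,r)\times sB(x,r)$. This yields
\[
\limsup_R\nu_R\bigl(B(x,r)^2\bigr)\le e^{\delta C_S}\frac{1}{\norm{m}}\int h\,d(\bar\mu\otimes\mu).
\]
The transformation rule for the Patterson--Sullivan measures (with bounded cocycles, since $S$ is finite) controls this integral by $C'\bar\mu(B(x,2r))\mu(B(x,2r))$.

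\emph{Step 3: sum and conclude.} Summing Step 2 over the cover from Step 1 gives
\[
\limsup_R\nu_R(\Oc)\le C''\sum_i\bar\mu(B_i)\mu(B_i),
\]
which is at most $\epsilon$ for $r$ small, since $\bar\mu\otimes\mu$ of a shrinking neighborhood of the diagonal tends to $0$. This requires the constant $C''$ to be independent of $r$, which holds because $S$ and $C_S$ are fixed before $r$ is chosen.

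The main obstacle is Step 2: making the translating elements $s,s'$ uniform, so that $\gamma\mapsto s\gamma s'^{-1}$ is boundedly-many-to-one and lands in a fixed compact subset of $\Fc_\theta^{(2)}$. This step relies on the minimality and perfectness of the limit set from Proposition~\ref{prop: convergence group}. If the conjugation approach fails, I would instead bound the count via shadow-lemma type estimates, writing $\gamma=\gamma_1\gamma_2$ with both factors long.
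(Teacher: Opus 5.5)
Your proposal is correct and is essentially the paper's proof. The paper proves your Step 2 comparison as Lemma~\ref{lem:weak invariance} (via Corollaries~\ref{cor:equivariance of U_theta} and~\ref{subadditive}). It realizes your finite set $S$ by taking $s'=\id$ and $s\in\{\eta_1,\eta_2\}$, two $\theta$-loxodromic elements with no common fixed point in $\Lambda_\theta(\Gamma)$ (Lemma~\ref{lem:shifting}), and then applies Proposition~\ref{off the diagonal} to bump functions together with quasi-invariance of $\mu$.

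The one step to tighten is Step 3. Smallness of $\bar\mu\otimes\mu$ on a shrinking neighborhood of the diagonal controls the mass of the union $\bigcup_i B_i\times B_i$, not the sum $\sum_i\bar\mu(B_i)\mu(B_i)$. So you need a cover whose multiplicity is bounded independently of $r$. The paper uses the covering dimension $d$ of $\Fc_\theta$ to get multiplicity at most $d+1$, and then bounds the sum by $(d+1)\max_i\bar\mu(B_i)$; a maximal $r$-separated net in the Riemannian manifold $\Fc_\theta$ would also work.
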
 

We start by verifying that $\limsup_{R\to\infty}\nu_R$ satisfies the following  coarse invariance property. 
 
 \begin{lemma}\label{lem:weak invariance} For any $g,h \in \Gamma$, there exist $C_1, C_2 > 0$ such that: if $U, V \subset \Fc_\theta^2$ are open and $\overline{U} \subset V$, then 
 $$
  \limsup_{R \to \infty} \nu_R(U) \leq C_1   \limsup_{R \to \infty} \nu_{R+C_2}((g,h) \cdot V).
  $$
  \end{lemma}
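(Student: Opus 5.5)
The plan is to compare the sum defining $\nu_R(U)$ with the sum defining $\nu_{R+C_2}((g,h)\cdot V)$ through the bijection $\gamma \mapsto \gamma' := h\gamma g^{-1}$ of $\Gamma$. Note that $(\gamma')^{-1} = g\gamma^{-1}h^{-1}$.

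\textbf{Step 1 (Cartan projections).} Using Corollary~\ref{subadditive} together with $\phi=\sum_{\alpha\in\theta} c_\alpha\omega_\alpha$, we get
\[
\abs{\phi(\kappa_\theta(h\gamma g^{-1})) - \phi(\kappa_\theta(\gamma))} \le C_2,
\]
where $C_2$ is a constant depending only on $g$, $h$ and $\phi$. One can take
\[
C_2 := \sum_{\alpha\in\theta}\abs{c_\alpha}\max\big\{\omega_\alpha,\omega_{\iota^*(\alpha)}\big\}\big(\kappa(h)+\kappa(g^{-1})\big).
\]
Hence, whenever $\phi(\kappa_\theta(\gamma)) \le R$, we have $\phi(\kappa_\theta(\gamma')) \le R + C_2$.

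\textbf{Step 2 (flags).} By Corollary~\ref{cor:equivariance of U_theta}, applied with the bounded sequences $a_n\in\{h\}$ and $b_n\in\{g^{-1}\}$, and using that $\Gamma$ is $\Psf_\theta$-divergent, we get
\[
\dist_{\Fc_\theta}\big(hU_\theta(\gamma), U_\theta(\gamma')\big)\to 0 \quad\text{and}\quad \dist_{\Fc_\theta}\big(gU_\theta(\gamma^{-1}), U_\theta((\gamma')^{-1})\big)\to 0
\]
as $\gamma\to\infty$ in $\Gamma$. Strictly, the corollary is stated for sequences; we apply it by contradiction along any escaping sequence.

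There is a bookkeeping point here: the ordering of the pair in the statement is $(U_\theta(\gamma^{-1}),U_\theta(\gamma))$, acted on by $(g,h)$. So the substitution to use is $\gamma' = h\gamma g^{-1}$, which sends the first coordinate to $gU_\theta(\gamma^{-1})$ and the second to $hU_\theta(\gamma)$, as required.

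\textbf{Step 3 (the open sets).} Since $\overline U$ is compact (as $\Fc_\theta^2$ is compact) and $\overline U\subset V$, there is $\eta>0$ such that the $\eta$-neighborhood of $(g,h)\cdot\overline U$ lies in $(g,h)\cdot V$. Here we use that $(g,h)$ acts by a homeomorphism, which is uniformly continuous. Combining with Step 2, there is a finite set $F\subset\Gamma$ such that for $\gamma\notin F$ with $(U_\theta(\gamma^{-1}),U_\theta(\gamma))\in U$, we have
\[
\big(U_\theta((\gamma')^{-1}),U_\theta(\gamma')\big)\in (g,h)\cdot V .
\]

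\textbf{Step 4 (conclusion).} Since $\gamma\mapsto\gamma'$ is injective, we obtain
\[
\#\{\gamma\notin F:\phi(\kappa_\theta(\gamma))\le R,\ (\cdots)\in U\}\le \#\{\gamma':\phi(\kappa_\theta(\gamma'))\le R+C_2,\ (\cdots)\in (g,h)V\}.
\]
Multiplying by $\delta e^{-\delta R} = e^{\delta C_2}\,\delta e^{-\delta(R+C_2)}$ and discarding the finite set $F$, which contributes $O(e^{-\delta R})$, gives
\[
\limsup_{R\to\infty}\nu_R(U)\le C_1\limsup_{R\to\infty}\nu_{R+C_2}\big((g,h)\cdot V\big), \qquad C_1 := e^{\delta C_2}.
\]
Both $C_1$ and $C_2$ are independent of $U$ and $V$.

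The main obstacle is Step 2: the convergence must hold uniformly over all $\gamma$ outside a finite set, not just along one sequence. This is handled by the contradiction argument via escaping sequences combined with divergence. The slack from $\overline U\subset V$ absorbs the resulting error. The rest is bookkeeping of constants.
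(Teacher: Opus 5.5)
Your proposal is correct and follows essentially the same route as the paper: reindex by $\gamma\mapsto h\gamma g^{-1}$, use Corollary~\ref{subadditive} to get $\phi(\kappa(h\gamma g^{-1}))\le\phi(\kappa(\gamma))+C_2$, use Corollary~\ref{cor:equivariance of U_theta} in a contradiction argument to obtain a finite exceptional set $F$, and take $C_1=e^{\delta C_2}$. The only difference is cosmetic: you first extract uniform $\eta$-closeness outside $F$ and then use the positive distance from the compact set $(g,h)\cdot\overline U$ to the complement of $(g,h)\cdot V$, whereas the paper runs the contradiction directly with subsequential limits in $(g,h)\cdot\overline U$.
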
 
  
  \begin{proof}  We first observe that there exists a finite set $F \subset \Gamma$ such that: if $\gamma \in \Gamma \smallsetminus F$ and $(U_\theta(\gamma^{-1}), U_\theta(\gamma)) \in U$, 
 then $(U_\theta((h\gamma g^{-1})^{-1}), U_\theta(h \gamma g^{-1})) \in (g,h) \cdot V$. Indeed, if this were not the case, then there would be an infinite sequence $(\gamma_n)$ in 
 $\Gamma$ such that $(U_\theta(\gamma_n^{-1}), U_\theta(\gamma_n)) \in U$ and $(U_\theta((h\gamma_n g^{-1})^{-1}), U_\theta(h \gamma_n g^{-1})) \notin (g,h) \cdot V$ for all $n$. 
On the other hand, by Corollary~\ref{cor:equivariance of U_theta},
    $$
\lim_{n \rightarrow \infty} \dist_{\Fc_\theta}(hU_\theta(\gamma_n), U_\theta(h\gamma_ng^{-1})) = 0= \lim_{n \rightarrow \infty} \dist_{\Fc_\theta}(gU_\theta(\gamma_n^{-1}), U_\theta(g\gamma_n^{-1}h^{-1})).
$$
Since $(U_\theta(\gamma_n^{-1}), U_\theta(\gamma_n)) \in U$, it follows that $\lim_{n\to\infty}(gU_\theta(\gamma_n^{-1}), hU_\theta(\gamma_n)) \in (g,h)\cdot\overline U$.
This implies that  $(U_\theta((h\gamma_n g^{-1})^{-1}), U_\theta(h \gamma_n g^{-1})) \in (g,h) \cdot V$ for sufficiently large $n$, 
which is a contradiction.
  
  By Corollary~\ref{subadditive} there exists $C_2 > 0$ such that 
  $$
  \phi( \kappa(h \gamma g^{-1})) \leq \phi(\kappa(\gamma) )+ C_2
  $$
  for all $\gamma \in \Gamma$. Thus 
  \begin{align*}
  & \limsup_{R \to \infty} \,  \nu_R(U)  =   \limsup_{R \to \infty} \, \delta e^{-\delta R}  \#\left\{ \gamma \in \Gamma : (U_\theta(\gamma^{-1}), U_\theta(\gamma)) \in U \text{ and } \phi(\kappa(\gamma) ) \leq R\right\} \\
 & \quad  = \limsup_{R \to \infty} \, \delta e^{-\delta R}  \#\left\{ \gamma \in \Gamma \smallsetminus F: (U_\theta(\gamma^{-1}), U_\theta(\gamma)) \in U \text{ and } \phi(\kappa(\gamma) ) \leq R\right\}\\
  & \quad \leq \limsup_{R \to \infty} \, \delta e^{-\delta R}\#\left\{ \gamma \in \Gamma : (U_\theta(\gamma^{-1}), U_\theta(\gamma)) \in (g,h) \cdot V \text{ and } \phi(\kappa(\gamma) ) \leq R+C_2\right\} \\
  & \quad \leq \limsup_{R \to \infty} e^{\delta C_2} \nu_{R+C_2}((g,h) \cdot V). \qedhere
  \end{align*} 
  \end{proof} 
  
  We will also use the following observation. 
 
 \begin{lemma}\label{lem:shifting}  Let $\eta_1,\eta_2\in \Gamma$ be $\theta$-loxodromic elements that do not have a common fixed point in $\Lambda_\theta(\Gamma)$. There exists some $r_0 > 0$ such that: if $U \subset \Fc_\theta$ has diameter at most $r_0$ and $U \cap \Lambda_\theta(\Gamma) \neq \varnothing$, then either  
 $$
 U \times \eta_1 U \subset \mathcal F_\theta^{(2)} \quad \text{or} \quad  U \times \eta_2 U  \subset \mathcal F_\theta^{(2)}
 $$
 \end{lemma}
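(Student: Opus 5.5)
Argue by contradiction using compactness of $\Fc_\theta$ and the transversality of the limit set. Suppose no such $r_0$ exists. Then there are subsets $U_n \subset \Fc_\theta$ with $\operatorname{diam}(U_n) \le 1/n$ and points $z_n \in U_n \cap \Lambda_\theta(\Gamma)$ such that both $U_n \times \eta_1 U_n$ and $U_n \times \eta_2 U_n$ fail to lie in $\Fc_\theta^{(2)}$. So we can choose $x_n, y_n, x_n', y_n' \in U_n$ with $(x_n, \eta_1 y_n) \notin \Fc_\theta^{(2)}$ and $(x_n', \eta_2 y_n') \notin \Fc_\theta^{(2)}$. Since $\Lambda_\theta(\Gamma)$ is closed in the compact space $\Fc_\theta$, pass to a subsequence with $z_n \to z \in \Lambda_\theta(\Gamma)$. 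The diameters shrink to zero, so $x_n, y_n, x_n', y_n'$ all converge to $z$, and continuity of the $\eta_i$ gives $\eta_i y_n \to \eta_i z$.

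Next, the set of non-transverse pairs $\Fc_\theta^2 \smallsetminus \Fc_\theta^{(2)}$ is closed, because $\Fc_\theta^{(2)}$ is an open $\mathsf G$-orbit (here $\theta$ is symmetric, so $\iota^*(\theta)=\theta$ and $\Fc_{\iota^*(\theta)}=\Fc_\theta$). Passing to the limit, $(z, \eta_1 z)$ and $(z, \eta_2 z)$ are both non-transverse. But $z$, $\eta_1 z$ and $\eta_2 z$ all lie in the $\Gamma$-invariant set $\Lambda_\theta(\Gamma)$, and $\Gamma$ is $\Psf_\theta$-transverse, so distinct points of $\Lambda_\theta(\Gamma)$ are transverse. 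Hence $\eta_1 z = z$ and $\eta_2 z = z$.

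It remains to see that $z$ is then a common fixed point of $\eta_1$ and $\eta_2$ in $\Lambda_\theta(\Gamma)$, contradicting the hypothesis. This is immediate from what we just showed, and it is the key step. I should still make sure the hypothesis ``no common fixed point in $\Lambda_\theta(\Gamma)$'' really covers every fixed point in the limit set, not only the attracting and repelling points $\eta_i^\pm$. This is fine: by Proposition~\ref{prop: convergence group}, a loxodromic element acting as a convergence group on $\Lambda_\theta(\Gamma)$ has only $\eta_i^\pm$ as fixed points there. Either reading of the hypothesis therefore yields the contradiction. There is no real obstacle beyond the closedness of the non-transverse locus and the transversality of $\Lambda_\theta(\Gamma)$.
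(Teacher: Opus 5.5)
Your proof is correct and is essentially the paper's argument: you take shrinking sets $U_n$ meeting $\Lambda_\theta(\Gamma)$, find a limit point $z\in\Lambda_\theta(\Gamma)$ with $(z,\eta_1 z)$ and $(z,\eta_2 z)$ both non-transverse, and use transversality of $\Lambda_\theta(\Gamma)$ to force $z$ to be a common fixed point; you also helpfully make explicit that the non-transverse locus is closed. Your closing discussion of which fixed points the hypothesis covers is unnecessary but harmless, since the hypothesis as stated already excludes every common fixed point in $\Lambda_\theta(\Gamma)$.
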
 
 
 \begin{proof} Suppose not. Then for each $n$ there exist a subset $U_n\subset\Fc_\theta$ and $x_n,y_n,x_n',y_n' \in U_n$ such that $U_n \cap \Lambda_\theta(\Gamma) \neq \varnothing$, ${\rm diam} \, U_n \leq n^{-1}$, and $(x_n, \eta_1 x_n'), (y_n,\eta_2 y_n') \in  \Fc_\theta^2 \smallsetminus  \Fc_\theta^{(2)}$. By passing to a subsequence, we may assume that $x_n, y_n,x_n',y_n' \rightarrow x \in \Lambda_\theta(\Gamma)$. Then $\eta_1 x, \eta_2 x \in \Lambda_\theta(\Gamma)$ are both non-transverse to $x$. Then since $\Gamma$ is transverse, we must have $x = \eta_1 x = \eta_2 x$, which contradicts the assumption that $\eta_1,\eta_2$ have no common fixed points in $\Lambda_\theta(\Gamma)$. \end{proof} 
 
Finally, we prove Proposition \ref{on the diagonal}.
 
 \begin{proof}[Proof of Proposition \ref{on the diagonal}] 
We start by fixing some constants. Let $d$ be the covering dimension of $\mathcal F_\theta$. Then any open cover of any closed subset of $\mathcal F_\theta$
has a refinement so that no point lies in more than $d+1$ sets in said refinement. 
Fix $\theta$-loxodromic elements $\eta_1,\eta_2\in\Gamma$ that do not have a common fixed point in $\Lambda_\theta(\Gamma)$, and let $r_0$ be the constant given by Lemma \ref{lem:shifting}. Then by Lemma \ref{lem:weak invariance}, there exist $C_1, C_2 > 0$ such that if $U, V \subset \Fc_\theta^2$ are open and $\overline{U} \subset V$, then for both $i=1,2$, 
 $$
  \limsup_{R \to \infty} \nu_R(U) \leq C_1   \limsup_{R \to \infty} \nu_{R+C_2}((\id,\eta_i) \cdot V).
  $$
Since $\mu$ is a Patterson--Sullivan measure, there exists $C_3 > 0$ such that 
$$
\mu(\eta_i E) \leq C_3 \mu(E)
$$
for any Borel subset $E \subset \Fc_\theta$ and $i=1,2$. Since $\mu$ and $\bar \mu$ have no atoms (see \cite[Thm.\ 1.4]{CZZ3}) and $\Fc_\theta$ is compact, there exists $r_1 > 0$ such that  if $U \subset \Fc_\theta$ has diameter at most $r_1$, then 
$$\bar \mu(U) \leq \frac{\epsilon}{2C_1C_3(d+1)}\quad\text{and}\quad\mu(U) \leq \|m\|.$$ 
 
Since $\Lambda_\theta(\Gamma)$ is compact, we may fix a finite cover $\{ V_1,\dots,V_k\}$ of $\Lambda_\theta(\Gamma)$ by open sets in $\Fc_\theta$ 
each with diameter less than $\min\{r_0,r_1\}$. By replacing this open cover with a refinement, 
we can assume that no point of $\Lambda_\theta(\Gamma)$ lies in more than $d+1$ of the $V_n$'s. 
Next for each $V_n$ fix an open set $U_n \subset V_n$ with $\overline{U}_n \subset V_n$ and $\Lambda_\theta(\Gamma) \subset \bigcup_n U_n$. 

Note that $\Oc_0 : = \bigcup_n (U_n \times U_n)\subset\Fc_\theta^2$ is an open neighborhood of 
\[\Lambda_\theta(\Gamma)^2 \smallsetminus \Fc_\theta^{(2)}=\{(x,x)\in\Fc_\theta^2:x\in\Lambda_\theta(\Gamma)\}.\] 
So we can fix an open set $\Oc_1\subset\Fc_\theta^2$ such that $D \subset \Oc_0 \cup \Oc_1$ and $\overline{\Oc}_1 \cap \Lambda_\theta(\Gamma)^2 = \varnothing$. 

We claim that $\Oc := \Oc_0 \cup \Oc_1$ satisfies the lemma. First notice that by the definition of the limit set, there exists a finite set $F \subset \Gamma$ such that if $\gamma \in \Gamma \smallsetminus F$, then 
$$
(U_\theta(\gamma^{-1}), U_\theta(\gamma)) \notin \Oc_1.
$$
Thus $\limsup_{R \to \infty} \nu_R(\Oc_1) =0$, so it suffices to show that $\limsup_{R \to \infty} \nu_R(\Oc_0) <\epsilon$.

By Lemma~\ref{lem:shifting}, for each $n\in\{1,\dots,k\}$ there exists $i_n \in \{1,2\}$ such that $\overline{V}_n \cap \eta_{i_n} \overline{V}_n$ is empty. In particular,  $ \overline{V}_n \times \eta_{i_n} \overline{V}_n$ is compact in $\Fc_\theta^{(2)}$. If $\overline{V}_n \times \eta_{i_n} \overline{V}_n$ has positive $\bar\mu \otimes \mu$ measure, fix a continuous compactly supported function $f_n \colon \Fc_\theta^{(2)} \rightarrow [0,1]$ with $f_n \equiv 1$ on  $\overline{V}_n \times \eta_{i_n} \overline{V}_n$ and 
$$
\int f_n d (\bar \mu \otimes \mu) \leq 2 (\bar \mu \otimes \mu)( \overline{V}_n \times \eta_{i_n} \overline{V}_n).
$$
Otherwise let $f_n \equiv 0$. 

Then 
$$
\sum_n \int f_n d (\bar \mu \otimes \mu) \leq 2\sum_n \bar\mu(\overline{V}_n) \mu(\eta_{i_n} \overline{V}_n) \leq 2 \sum_n  \frac{\epsilon}{2C_1C_3(d+1)} C_3 \mu(\overline{V}_n) \leq \frac{\epsilon}{C_1} \|m\|.
$$
Hence, by Lemma \ref{off the diagonal} and \ref{lem:weak invariance},
\begin{align*}
 \limsup_{R \to \infty} &  \, \nu_R(\Oc_0) \leq \sum_n  \limsup_{R \to \infty} \nu_R(U_n \times U_n) \leq C_1  \sum_n   \limsup_{R \to \infty}\nu_{R+C_2}((\id,\eta_{i_n}) \cdot V_n \times V_n) \\
 & \leq  C_1  \sum_n   \limsup_{R \to \infty} \int f_n d \nu_{R+C_2} = \frac{C_1}{\|m\|} \sum_n \int f_n d (\bar \mu \otimes \mu) \leq \epsilon. \qedhere
 \end{align*}
 \end{proof}

\subsection{Finishing the proof} \label{sec:finishing proof}

We will now combine Propositions \ref{off the diagonal} and \ref{on the diagonal} to prove Theorem \ref{new equidistribution}.

\begin{proof}[Proof of Theorem \ref{new equidistribution}]
As before, we may assume that $f$ is non-negative. For $n\in\Nb$, Proposition \ref{on the diagonal}  implies that there is a neighborhood $\mathcal O_n$ of $D:=\Fc_\theta^2\smallsetminus\Fc_\theta^{(2)}$ such that 
\[\limsup_{R\to\infty}\nu_R(\mathcal O_n)\le\frac{1}{n}.\] 
Since $\Lambda_\theta(\Gamma)$ is transverse, $D \cap \Lambda_\theta(\Gamma)^2 = \{(x,x) : x \in \Lambda_\theta(\Gamma)\}$. 
Hence, since $\mu$ has no atoms (see \cite[Thm.\ 1.4]{CZZ3}), 
$$(\bar \mu\otimes\mu)(D)=\int_{\Lambda_\theta(\Gamma)} \mu(\{x\}) d\bar\mu(x)=0.$$
So by shrinking $\mathcal O_n$ if necessary, we may also assume that for all $n\in\Nb$,
\[(\bar\mu\otimes\mu)(\mathcal O_n)\le\frac{1}{n}.\] 

For each $n\in\Nb$, $\{\Fc_\theta^{(2)},\mathcal O_n\}$ is an open cover of $\Fc_\theta^2$, so we can decompose $f=f_{n,1}+f_{n,2}$ where $f_{n,1}, f_{n,2}$ are both non-negative and have support in $\Fc_\theta^{(2)}$ and $\mathcal O_n$ respectively. Then for every $n\in\Nb$, 
\[\int f_{n,1}\,d\nu_R\le \int f\,d\nu_R =  \int f_{n,1}\,d\nu_R+\int f_{n,2}\,d\nu_R.\]
By taking the limit supremum as $R\to\infty$ and applying Proposition \ref{off the diagonal} to $f_{n,1}$, we have
\[ \limsup_{R\to\infty}\int f\,d\nu_R \le  \frac{1}{\norm{m}}\int f_{n,1}\,d(\bar\mu \otimes \mu)+\limsup_{R\to\infty}\int f_{n,2}\,d\nu_R\]
Also, if $M$ denotes the maximum of $f$ on $\Fc_\theta^2$, then for all $R>0$, 
\[ 0\le \limsup_{R\to\infty}\int f_{n,2}\,d\nu_R\le \frac{M}{n}.\]
Combining the above estimates, we have
\[\limsup_{R\to\infty}\int f\,d\nu_R \le  \frac{1}{\norm{m}}\int f_{n,1}\,d(\bar\mu \otimes \mu)+\frac{M}{n}.\]
Taking the limit as $n\to\infty$ gives
\[\limsup_{R\to\infty}\int f \,d\nu_R \leq \frac{1}{\norm{m}}\int f\,d(\bar\mu \otimes \mu). 
\]

On the other hand, 
\[0\le \int f_{n,2}\,d(\bar\mu \otimes \mu)\le \frac{M}{n}\]
and so
\[ 
\int f\,d(\bar\mu \otimes \mu)- \frac{M}{n} \le \int f_{n,1}\,d(\bar\mu \otimes \mu) .
\]
Hence 
\[ \liminf_{R\to\infty}\int f \,d\nu_R \geq \liminf_{R\to\infty} \int f_{n,1} \,d\nu_R \geq \frac{1}{\|m\|} \int f \,d(\bar\mu \otimes \mu) - \frac{M}{n\|m\|} \]
for all $n$, and so in the limit $n\to\infty$ we have
\[ \liminf_{R\to\infty} \int f\,d\nu_R \geq \frac{1}{\|m\|}  \int f \,d(\bar\mu \otimes \mu) .\]

Hence we obtain
\[ \frac{1}{\norm{m}}\int f\,d(\bar\mu \otimes \mu) \leq \liminf_{R\to\infty}\int f \,d\nu_R \leq \limsup \int f \,d\nu_R \leq \frac{1}{\norm{m}}\int f\,d(\bar\mu \otimes \mu) . \]
Since the two ends of this string of inequalities are equal, we must have equality throughout.
\end{proof}

\end{document}